\newtheorem{thm}{Theorem}[section]
\newtheorem{prop}[thm]{Proposition}
\newtheorem{lem}[thm]{Lemma}
\newtheorem{clm}[thm]{Claim}
\newtheorem*{thmA}{Theorem A}
\newtheorem*{thmB}{Theorem B}
\newtheorem*{thmC}{Theorem C}
\theoremstyle{definition}
\newtheorem{defn}[thm]{Definition}
\newtheorem*{ack}{Acknowledgements}
\begin{document}

\author{Christian Schulz}
\address
{Department of Mathematics\\University of Illinois at Urbana-Champaign\\1409 West Green Street\\Urbana, IL 61801}

\title{Undefinability of multiplication in Presburger arithmetic with sets of powers}

\begin{abstract}
    We begin by proving that any Presburger-definable image of one or more sets of powers has zero natural density. Then, by adapting the proof of a dichotomy result on o-minimal structures by Friedman and Miller, we produce a similar dichotomy for expansions of Presburger arithmetic on the integers. Combining these two results, we obtain that the expansion of the ordered group of integers by any number of sets of powers does not define multiplication.
\end{abstract}

\maketitle

\section{Introduction}

    In this paper, we consider expansions of \textit{Presburger arithmetic,} the structure $\mathcal{Z} = (\mathbb{Z}, <, +)$,\footnote{Typical treatments of Presburger arithmetic instead focus on $\mathcal{N} = (\mathbb{N}, +)$. In this paper, it is more convenient to use $\mathcal{Z}$. The change makes no difference for tameness; note that $\mathcal{N}$ defines a subset $Z$ of $\mathbb{N}^2$ and relations thereon that form an isomorphic copy of $\mathcal{Z}$ along with the inclusion map $\mathbb{N} \hookrightarrow Z$; and likewise the subset $\mathbb{N}$ is definable in $\mathcal{Z}$.} by sets of the form $a^\mathbb{N} = \{a^i : i \in \mathbb{N}\}$, where $a \in \mathbb{Z}_{>1}$.
    
    Expansions of $\mathcal{Z}$ occupy a great many possibilities in terms of tameness. On the one hand, $\mathcal{Z}$ itself is well-understood to be quite tame: it admits effective quantifier elimination upon expansion by the (definable) relations $\equiv_{n}$ of equivalence modulo $n$ for each integer $n > 1$ and thus has a decidable theory, and additionally it satisfies model-theoretic tameness conditions like NIP. On the other hand, the expansion of $\mathcal{Z}$ by the multiplication function, i.e. the ring of integers,\footnote{In the integer ring, $<$ is definable via Lagrange's four-square theorem.} is sufficiently complex to perform G\"odel coding and hence defines any arithmetical set. In a previous paper, Hieronymi and Schulz \cite{HS} show that $(\mathbb{Z}, <, +, a_1^\mathbb{N}, a_2^\mathbb{N})$ does not have a decidable theory provided that $a_1$ and $a_2$ are \textit{multiplicatively independent,} i.e. their powers are all distinct except in the trivial case $a_1^0 = a_2^0$. Here we will show that these structures however do not define the multiplication function:
    
    \begin{thmA}
        Let $a_1, \dots, a_n \in \mathbb{Z}_{>1}$. Then $(\mathbb{Z}, <, +, a_1^\mathbb{N}, \dots, a_n^\mathbb{N})$ does not define multiplication.
    \end{thmA}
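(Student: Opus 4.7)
The plan is to combine the two components of the abstract: (A) every Presburger-definable image of a Cartesian power of $\bigcup_{i=1}^n a_i^{\mathbb{N}}$ has natural density zero in $\mathbb{Z}$, and (B) an adaptation to $\mathcal{Z}$ of the Friedman--Miller dichotomy. Writing $\mathcal{R} = (\mathbb{Z}, <, +, a_1^{\mathbb{N}}, \dots, a_n^{\mathbb{N}})$, I would aim to show that $\mathcal{R}$ falls on the tame side of (B), and that the tame side excludes definability of multiplication.

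Concretely, I would expect (B) to take the form: for any expansion $\mathcal{R}$ of $\mathcal{Z}$ by unary predicates $E_1, \dots, E_n$, either $\mathcal{R}$ defines a configuration rich enough to yield multiplication (via G\"odel coding), or every $\mathcal{R}$-definable subset of $\mathbb{Z}$ is a Boolean combination of $\mathcal{Z}$-definable sets and Presburger-definable images of Cartesian powers of $\bigcup_i E_i$. In the tame case, combining with (A), every $\mathcal{R}$-definable subset of $\mathbb{Z}$ agrees with a $\mathcal{Z}$-definable (hence eventually periodic) set up to a set of density zero. Multiplication, however, allows one to define the set of non-squarefree integers, which has positive density $1 - 6/\pi^2$. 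Since $1 - 6/\pi^2$ is irrational but every Presburger-definable subset of $\mathbb{Z}$ has rational density, the non-squarefree integers cannot agree up to density zero with any Presburger-definable set. This contradiction would establish Theorem A.

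The principal obstacle is (B). Friedman and Miller rely on cell decomposition, continuity, and the order topology intrinsic to o-minimal structures; none of these translates literally to the discrete setting of $\mathcal{Z}$. The substitute tools will be Presburger quantifier elimination (every $\mathcal{Z}$-definable subset of $\mathbb{Z}$ is a finite Boolean combination of arithmetic progressions) together with a combinatorial cell-type decomposition tailored to expansions of $\mathcal{Z}$. The subtle work is to formulate the right tameness clause and then to extract, from its failure, an $\mathcal{R}$-definable configuration from which multiplication can be coded. Once (B) is in hand, the density estimate (A) plays its role cleanly, certifying that the sets of powers are sparse enough to land on the tame side and thus forbidding multiplication from being $\mathcal{R}$-definable.
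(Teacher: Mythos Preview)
Your overall architecture---a density result for Presburger images of $E=\bigcup_i a_i^{\mathbb{N}}$ together with a Friedman--Miller-style dichotomy, finished off with a squarefree-type witness---matches the paper's. But the shape you give the dichotomy (B) is off in a way that breaks the argument.

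You phrase (B) as a two-sided alternative for \emph{arbitrary} expansions: ``either $\mathcal{R}$ defines multiplication, or every $\mathcal{R}$-definable subset of $\mathbb{Z}$ is a Boolean combination of Presburger sets and Presburger images of powers of $E$.'' The paper proves nothing of this kind, and it is not clear such a statement is true: showing that failure of the tame clause forces a coding of multiplication is exactly the hard, open direction the paper avoids. The paper's Theorem~C is instead an implication with the density hypothesis built in: \emph{if} every Presburger image $h(E^M)$ lies in an ideal $\mathcal{I}$, \emph{then} every subset of $\mathbb{Z}$ definable in $(\mathbb{Z},<,+,E)^{\#}$ either contains arbitrarily long pieces of some fixed arithmetic progression or lies in $\mathcal{I}$. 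Theorem~B supplies the hypothesis with $\mathcal{I}=\{\text{density-zero sets}\}$; there is no ``wild side'' to rule out.

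This matters for your endgame. Your proposed tame conclusion (``agrees with a Presburger set up to density zero'') is strictly stronger than what the paper obtains and does not follow from it: a definable set can contain arbitrarily long pieces of an arithmetic progression while having, say, irrational density, so it need not be close to any Presburger set. In particular, your witness---the \emph{non}-squarefree integers---does not contradict the paper's dichotomy: multiples of $4$ give arbitrarily long runs inside the non-squarefree integers, so they fall squarely into the first alternative. The paper instead uses the \emph{squarefree} integers, which have positive density $6/\pi^{2}$ (so not in $\mathcal{I}$) but are not piecewise syndetic (so they fail the ``long pieces of an AP'' clause as well). That is the pair of properties your counterexample needs to exhibit; the irrational-density trick, while elegant, only works against your stronger and unproven tame conclusion.
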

    
    This has been an open question since at latest 1996; it is part of Question 9 in \cite{MV}. The context behind the consideration of this particular class of structures deserves a great deal of explanation. We note that without the ordering relation, $(\mathbb{Z}, +, a_1^\mathbb{N}, \dots, a_n^\mathbb{N})$ is superstable of $U$-rank $\omega$, as shown by Conant \cite{Conant}; hence, without $<$ there can be no definition of multiplication. Conant's methods, however, do not readily transfer to the case where ordering is included. The introduction of $<$ instead places us within the realm of \textit{$k$-automatic} expansions of $\mathcal{Z}$, which usually do not satisfy NIP or NTP$_2$ but may still enjoy decidable theories.
    
    A $k$-automatic set (of arity $d \in \mathbb{N}$) is a subset of $\mathbb{Z}^d$ whose base-$k$ representations form a regular language.\footnote{To account for our convention of using $\mathcal{Z}$ as mentioned in the previous footnote, we elaborate: a set $S \subseteq \mathbb{Z}^d$ is $k$-automatic if $\phi(S) \cap \mathbb{N}^d$ is $k$-automatic, where $\phi$ is each of the $2^d$ maps that flip the signs of any subset of the coordinates.} It is shown by Bruy\`ere et al. \cite{BHMV} that a set is $k$-automatic if and only if it is definable in $\mathcal{Z}_k = (\mathbb{Z}, <, +, V_k)$, where $V_k : \mathbb{Z} \to k^\mathbb{N}$ maps $n$ to the largest power of $k$ dividing $n$, or $0$ if $n = 0$. The expansion $\mathcal{Z}_k$ is known as \textit{B\"uchi arithmetic.} Moreover, it is shown by B\`es \cite{Bes97} that if $S \subseteq \mathbb{Z}^d$ is a $k$-automatic set not definable in $\mathcal{Z}$, then $(\mathbb{Z}, <, +, S)$ defines the set $k^\mathbb{N}$. This set is not definable in $\mathcal{Z}$,\footnote{The set $k^\mathbb{N}$ is not semilinear and thus cannot be Presburger-definable by Ginsburg and Spanier \cite{GinsSpanier}.} so these results taken together establish the status of $V_k$ as a ``maximal'' $k$-automatic set and $k^\mathbb{N}$ as a ``minimal'' (non-Presburger-definable) $k$-automatic set.
    
    By B\"uchi's theorem, the theory of $\mathcal{Z}_k$ is decidable. Hence, so is the theory of any expansion of $\mathcal{Z}$ by $k$-automatic sets for a fixed $k$. Let $a_1, a_2 \in \mathbb{Z}_{>1}$ be multiplicatively independent. A natural next step is then to examine the properties of $(\mathbb{Z}, <, +, S_{a_1}, S_{a_2})$ where $S_{a_i}$ for $i = 1, 2$ is an $a_i$-automatic set not definable in $\mathcal{Z}$. The first major result in this direction is the Cobham-Sem\"enov theorem, which states that this expansion is always nontrivial, i.e. it may never be the case that $S_{a_2}$ is definable in $\mathcal{Z}_{a_1}$. This was proven by Cobham \cite{cobham} for the case where $S_{a_1}, S_{a_2} \subseteq \mathbb{N}$ and extended to the more general case $S_{a_1}, S_{a_2} \subseteq \mathbb{N}^d$ by Sem\"enov \cite{Semenov}.
    
    It is also known that the theory of such a structure is never decidable. For the case $S_{a_1} = V_{a_1}, S_{a_2} = V_{a_2}$ this was proven by Villemaire \cite{Villemaire}, and the more general case where $S_{a_1} = V_{a_1}$ was shown by B\`es \cite{Bes97}. Both of these authors also show that in the respective cases, the resulting structure defines the multiplication function on $\mathbb{N}$ and hence cannot have a decidable theory. However, in \cite{HS}, multiplication is not shown to be definable in $(\mathbb{Z}, <, +, S_{a_1}, S_{a_2})$. Theorem A of this paper thus demonstrates that a departure from Villemaire and B\`es' method is required in the general case by exhibiting for the first time a structure, namely $(\mathbb{Z}, <, +, a_1^\mathbb{N}, a_2^\mathbb{N})$, that defines non-Presburger $a_1$-automatic and $a_2$-automatic sets and has an undecidable theory but does not define the multiplication on $\mathbb{Z}$. One may note that this leaves mostly open the question of intermediate expansions, i.e. of characterizing for precisely which $S_{a_1}, S_{a_2}$ the structure $(\mathbb{Z}, <, +, S_{a_1}, S_{a_2})$ defines multiplication, which we will leave for future work.
    
    Theorem A follows from the conjunction of two results, one number-theoretic and one model-theoretic. The first is an application of a proposition of Schlickewei and Schmidt \cite{SS}. Its full statement is quite technical, so we will defer discussion of the exact proposition until section 3, but in essence it provides a bound for solutions to a particular form of equation involving a finitely generated multiplicative subgroup of a number field that lie outside one of a finite number of hyperplanes. We are able to leverage this result by noting that $a_1^\mathbb{N} \cup \dots \cup a_n^\mathbb{N}$ is a subset of such a subgroup, namely the subgroup whose generators are $\{a_1, \dots, a_n\}$. Using this, we derive:
    
    \begin{thmB}
        Let $E = \bigcup_i a_i^\mathbb{N}$, where $a_i \in \mathbb{Z}_{>1}$ for $1 \leq i \leq n$, and let $f : \mathbb{Z}^M \to \mathbb{Z}$ be a Presburger-definable function. Then $f(E^M)$ has zero natural density.
    \end{thmB}
    
    In order to utilize this result, we build on previous work by Friedman and Miller \cite{FM}. There, they prove that when we add a set $E$ to the signature of an o-minimal expansion $\mathcal{R}$ of the ordered real additive group, if the image of $E$ under any function definable in $\mathcal{R}$ has a small closure, then every set definable in the resulting structure either has interior or is small. Here ``small'' can refer to any of several sparseness conditions on subsets of the real line: nowhere dense, Lebesgue-null, etc. In this paper, we produce a corresponding result for expansions of the ordered group of integers. Here, by $(\mathbb{Z}, <, +, E)^\#$, we mean $(\mathbb{Z}, <, +, (S))$ where $S$ ranges over all subsets of powers of $E$.
    
    \begin{thmC}
        Let $\mathcal{I}$ be any set-theoretic ideal on $\mathbb{Z}$. Let $E \subseteq \mathbb{Z}$ be such that, for every $M \in \mathbb{N}$ and $h : \mathbb{Z}^M \to \mathbb{Z}$ Presburger-definable, the image $h(E^M)$ is in $\mathcal{I}$. Then every subset of $\mathbb{Z}$ definable in $(\mathbb{Z}, <, +, E)^\#$ either contains arbitrarily long pieces of some arithmetic progression or lies in $\mathcal{I}$.
    \end{thmC}
    
    This is a significant result in its own right, and it merits some elaboration. Here ``$S \subseteq \mathbb{Z}$ contains arbitrarily long pieces of some arithmetic progression'' means that there exists $N \in \mathbb{Z}^+$ such that for any $\ell \in \mathbb{Z}^+$, there exist $\ell$ elements of $S$ with a constant difference of $N$. This is a simultaneous strengthening of two more common largeness conditions used in number-theoretic literature. In particular, a set $S$ satisfying the above contains arbitrarily long arithmetic progressions, and it is piecewise syndetic. We additionally note that this condition is equivalent to stating that for some $S' \subseteq S$, there exists a unary affine function $f$ such that $f(S')$ is a thick set (hence, this condition is also a weakening of thickness).
    
    On the other hand, note that we need not assume much about the particular sparseness condition we wish to use; any set-theoretic ideal on $\mathbb{Z}$ will do. We need only assume that subsets and finite unions of sparse sets are sparse. The obvious choice is to use zero natural density, but there are other useful notions of sparseness that give rise to the same dichotomy. The reason we are able to give such a general statement instead of listing specific sparseness conditions as Friedman and Miller did has to do with the fact that in the discrete setting, we have no reason to take the topological closure of the image of $E$.
    
    Theorem C has broader applications within the model theory of expansions of Presburger arithmetic, but within this paper its application will be in proving Theorem A. We will thus state here the proof of Theorem A using Theorems B and C, with the rest of this paper devoted to proving Theorems B and C.
    
    \begin{proof}[Proof of Theorem A]
        Let $E = a_1^\mathbb{N} \cup \dots \cup a_n^\mathbb{N}$. By Theorem B, the image of $E^M$ under any Presburger function $h$ has zero natural density. Let $\mathcal{I}$ be the collection of sets of zero natural density; this is a set-theoretic ideal. Thus, we may apply Theorem C and conclude that every subset of $\mathbb{Z}$ definable in $(\mathbb{Z}, <, +, E)^\#$ either contains arbitrarily long pieces of some arithmetic progression or has zero natural density. Note that $(\mathbb{Z}, <, +, E)^\#$ defines the sets $a_i^\mathbb{N}$ for $1 \leq i \leq n$. Then it suffices to show that $(\mathbb{Z}, <, +, \cdot)$ defines some set that does not contain arbitrarily long pieces of any arithmetic progression yet has positive natural density.
        
        Let $S \subseteq \mathbb{Z}$ contain arbitrarily long pieces of the same arithmetic progression; then as mentioned previously, $S$ is piecewise syndetic. So it suffices to give an example of a subset of $\mathbb{Z}$ (or $\mathbb{N}$) that is arithmetical and has positive natural density but is not piecewise syndetic. The squarefree integers form such a set: it is not piecewise syndetic (folklore), its natural density is well-known to be $\frac{6}{\pi^2}$, and it is defined by the formula $\neg \exists y, z : 1 < y \wedge y \cdot y \cdot z = x$.
    \end{proof}
    
    \begin{ack}
        The author would like to thank Philipp Hieronymi and Alexi Block Gorman for guidance, as well as Alf Dolich and Chris Miller, who had some initial work in this direction but allowed the author to work out the details and publish the results. The author was partially supported by National Science Foundation Grant no. DMS -- 1654725.
    \end{ack}
    
    \section{Preliminaries}
    
    Throughout, $\mathbb{N}$ denotes the natural numbers, which by our convention comprise all nonnegative integers including zero. ``Defines'' refers to definability by a first-order formula \textit{without} parameters, although note that this makes little difference in our scope; in the structures we will consider, any constant is definable. We denote the cardinality of set $A$ by $|A|$.
    
    One of our main results uses the concept of the natural density of a subset of $\mathbb{Z}$. Natural density is typically defined for subsets of $\mathbb{N}$, so it is worth explaining our definition:
    
    \begin{defn}
	    The \textit{upper natural density} of a subset $A \subseteq \mathbb{Z}$ is the limit superior:
	    
	    $$\limsup_{n \to \infty} \frac{|A \cap \{-n, -n+1, \dots, n-1, n\}|}{2n + 1}.$$
	    
	    The \textit{lower natural density} of $A$ is the limit inferior:
	    
	    $$\liminf_{n \to \infty} \frac{|A \cap \{-n, -n+1, \dots, n-1, n\}|}{2n + 1}.$$
	    
	    If these two expressions are equal, we refer to both as simply the \textit{natural density} of $A$, denoted $d(A)$.
	\end{defn}
	
	Note that for $A \subseteq \mathbb{N}$, the natural density of $A$ as a subset of $\mathbb{N}$ is \textit{not} equal to its natural density as a subset of $\mathbb{Z}$; rather, it is equal to the natural density of $A \cup -A$ as a subset of $\mathbb{Z}$. This choice of convention is so that the natural density on $\mathbb{Z}$ may act as a finitely additive probability measure. We thus check that the appropriate properties are satisfied, as well as some other convenient properties:

	\begin{lem}
	\label{density_props}
	    Let $A, B, C \subseteq \mathbb{Z}$. Then all of the following hold:
	    \begin{enumerate}[(i)]
	        \item $d(\varnothing) = 0$.
	        \item $d(\mathbb{Z}) = 1$.
	        \item Let $A, B$ be disjoint, with $C = A \cup B$. If any two of $A, B, C$ have a natural density, so does the remaining set, and $d(A) + d(B) = d(C)$.
	        \item If $d(A) = 0$ and $B \subseteq A$, then $d(B) = 0$.
	        \item If $d(A)$ is well-defined, then for all $k \in \mathbb{Z}$, $d(A + k) = d(A)$, where $A + k$ is a translation of the set $A$.
	        \item If $d(A)$ is well-defined, then for all $0 \neq k \in \mathbb{Z}$, $d(kA) = \frac{d(A)}{k}$, where $kA$ is a scaled version of the set $A$.
	        \item Any Presburger-definable subset of $\mathbb{Z}$ has a natural density.
	        \item For $k > 1$, $d(k^\mathbb{N}) = 0$.
	    \end{enumerate}
	\end{lem}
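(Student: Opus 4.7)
My plan is to dispatch items (i)--(vi) and (viii) by elementary counting arguments, reserving the main effort for (vii), which is the only part requiring substantive input. Throughout, write $I_n = \{-n, \ldots, n\}$, so that the denominator of the density ratio is $|I_n| = 2n+1$. Items (i) and (ii) are immediate from $|\varnothing \cap I_n| = 0$ and $|\mathbb{Z} \cap I_n| = 2n+1$, and (iv) follows from the inequality $|B \cap I_n| \le |A \cap I_n|$. For (iii), I apply the identity $|C \cap I_n| = |A \cap I_n| + |B \cap I_n|$ together with the standard algebra of limits: whenever two of the three quotients converge, so does the third, and the additive relation holds.

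For translation invariance (v), the key observation is $|(A+k) \cap I_n| = |A \cap (I_n - k)|$, and since $I_n$ and $I_n - k$ have symmetric difference of size at most $2|k|$, the counts differ by $O(1)$, an error that vanishes upon dividing by $2n+1$. For scaling (vi), I first reduce to $k > 0$ using the symmetry of $I_n$ under negation (which handles negative $k$ up to the expected absolute value), and then apply the identity $(kA) \cap I_n = k\bigl(A \cap I_{\lfloor n/k\rfloor}\bigr)$ to write
\[
\frac{|kA \cap I_n|}{2n+1} = \frac{|A \cap I_{\lfloor n/k\rfloor}|}{2\lfloor n/k\rfloor + 1} \cdot \frac{2\lfloor n/k\rfloor + 1}{2n+1};
\]
the second factor tends to $1/k$, yielding the claimed relation. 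For (viii), the trivial bound $|k^\mathbb{N} \cap I_n| \le \lfloor \log_k n\rfloor + 1$ kills the quotient in the limit.

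The substantive part is (vii). My approach is to invoke Presburger quantifier elimination (mentioned in the introduction): every $\mathcal{Z}$-definable subset of $\mathbb{Z}$ is a Boolean combination of half-lines $\{x : x > a\}$, $\{x : x < a\}$, and congruence classes $\{x : x \equiv r \pmod{n}\}$ for finitely many fixed moduli. After passing to a common modulus $N$ and reducing disjunctions to disjoint unions, any such set decomposes as a finite disjoint union of a bounded piece (density $0$ by (i) and (iv)), finitely many complete arithmetic progressions $N\mathbb{Z} + a$ (each of density $1/N$ via (v) and (vi) applied to $\mathbb{Z}$), and finitely many one-sided tails of arithmetic progressions (each of density $1/(2N)$, checkable directly from the definition). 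Part (iii) then assembles these into a well-defined natural density for the full set. The only real obstacle I anticipate is keeping the bookkeeping clean in this decomposition, since one must verify that the atomic pieces can be chosen disjoint with a common modulus; but given quantifier elimination this reduces to a careful yet routine case analysis.
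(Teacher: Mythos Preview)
Your proposal is correct and follows essentially the same approach as the paper: elementary counting for (i)--(vi) and (viii), and for (vii) a reduction via (iii), (v), (vi) to the density of one-sided arithmetic progressions. The only cosmetic difference is that the paper invokes the semilinear description of Presburger-definable subsets of $\mathbb{Z}$ directly (as finite unions of sets $\{an+b : n \in \mathbb{N}\}$), whereas you rederive that decomposition from quantifier elimination; the endpoint is the same computation $d(\mathbb{N}) = 1/2$.
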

	\begin{proof}
	    These results are elementary, and proofs are not qualitatively different from the corresponding proofs for natural density over $\mathbb{N}$. We will however provide complete proofs here for the convenience of the reader.
	
	    \begin{enumerate}[(i)]
	        \item $$\lim_{n \to \infty} \frac{|\varnothing \cap \{-n, -n+1, \dots, n-1, n\}|}{2n + 1} = \lim_{n \to \infty} \frac{0}{2n + 1} = 0.$$
	        \item $$\lim_{n \to \infty} \frac{|\mathbb{Z} \cap \{-n, -n+1, \dots, n-1, n\}|}{2n + 1} = \lim_{n \to \infty} \frac{2n + 1}{2n + 1} = 1.$$
	        \item Assume the natural densities of $A$ and $B$ are well-defined; then:
	        
	        \begin{align*}
	            & \lim_{n \to \infty} \frac{|C \cap \{-n, -n+1, \dots, n-1, n\}|}{2n + 1} \\
	            =& \lim_{n \to \infty} \frac{|(A \cup B) \cap \{-n, -n+1, \dots, n-1, n\}|}{2n + 1} \\
	            \intertext{Because the union $A \cup B$ is disjoint:}
	            =& \lim_{n \to \infty} \frac{|A \cap \{-n, -n+1, \dots, n-1, n\}| + |B \cap \{-n, -n+1, \dots, n-1, n\}|}{2n + 1} \\
	            =& \lim_{n \to \infty} \left( \frac{|A \cap \{-n, -n+1, \dots, n-1, n\}|}{2n + 1} + \frac{|B \cap \{-n, -n+1, \dots, n-1, n\}|}{2n + 1} \right) \\
	            =& \; d(A) + d(B).
	        \end{align*}

	        \noindent Similarly assume the natural densities of $A$ and $C$ are well-defined; then:
	        
	        \begin{align*}
	            & \lim_{n \to \infty} \frac{|B \cap \{-n, -n+1, \dots, n-1, n\}|}{2n + 1} \\
	            =& \lim_{n \to \infty} \frac{|(C \setminus A) \cap \{-n, -n+1, \dots, n-1, n\}|}{2n + 1} \\
	            =& \lim_{n \to \infty} \frac{|C \cap \{-n, -n+1, \dots, n-1, n\}| - |A \cap \{-n, -n+1, \dots, n-1, n\}|}{2n + 1} \\
	            =& \lim_{n \to \infty} \left( \frac{|C \cap \{-n, -n+1, \dots, n-1, n\}|}{2n + 1} - \frac{|A \cap \{-n, -n+1, \dots, n-1, n\}|}{2n + 1} \right) \\
	            =& \; d(C) - d(A).
	        \end{align*}

	        \noindent The case where the natural densities of $B$ and $C$ are well-defined is analogous.
	        
	        \item Natural density is nonnegative, as it is a limit of a ratio of nonnegative quantities; so by (iii), it suffices to show that the natural density of $B$ is well-defined. For all $n$,
	        
	        $$0 \leq \frac{|B \cap \{-n, -n+1, \dots, n-1, n\}|}{2n + 1} \leq \frac{|A \cap \{-n, -n+1, \dots, n-1, n\}|}{2n + 1},$$
	        
	        so this follows from the squeeze theorem.
	        
	        \item Note that $|(A+k) \cap \{-n, \dots, n\}| = |A \cap \{-n-k, \dots, n-k\}|$ and that:
	        
	        $$|(A \cap \{-n-k, \dots, n-k\}) \: \triangle \: (A \cap \{-n, \dots, n\})| \leq 2k.$$
	        
	        But $\lim_{n \to \infty} \frac{2k}{2n+1} = 0$, so again by the squeeze theorem, we have $d(A+k) = d(A)$.
	        
	        \item
	        
	        \begin{align*}
	            & \lim_{n \to \infty} \frac{|kA \cap \{-n, -n+1, \dots, n-1, n\}|}{2n + 1} \\
	            =& \lim_{n \to \infty} \frac{|kA \cap \{-nk, -nk+1, \dots, nk-1, nk\}|}{2nk + 1} \\
	            =& \lim_{n \to \infty} \frac{|A \cap \{-n, -n+1, \dots, n-1, n\}|}{2nk + 1} \\
	            =& \lim_{n \to \infty} \frac{|A \cap \{-n, -n+1, \dots, n-1, n\}|}{2nk + k} \lim_{n \to \infty}  \frac{2nk + k}{2nk + 1} \\
	            =& \; \frac{d(A)}{k} \cdot 1.
	        \end{align*}

	        \item Any Presburger-definable set $A \subseteq \mathbb{Z}$ is a finite union of sets of the form $\{an + b : n \in \mathbb{N}\}$ for integers $a, b$. So by (iii) it suffices to assume $A = \{an + b : n \in \mathbb{N}\}$. Then by (v) it suffices to let $b = 0$, and by (vi) it suffices to let $a = 1$; i.e. $A = \mathbb{N}$. Then:
	        
	        $$\lim_{n \to \infty} \frac{|\mathbb{N} \cap \{-n, -n+1, \dots, n-1, n\}|}{2n + 1} = \lim_{n \to \infty} \frac{n + 1}{2n + 1} = \frac{1}{2}.$$
	        
	        \item
	        
	        \begin{align*}
	            & \lim_{n \to \infty} \frac{|k^\mathbb{N} \cap \{-n, -n+1, \dots, n-1, n\}|}{2n + 1} \\
	            =& \lim_{n \to \infty} \frac{|k^\mathbb{N} \cap \{-k^n, -k^n+1, \dots, k^n-1, k^n\}|}{2k^n + 1} \\
	            =& \lim_{n \to \infty} \frac{n+1}{2k^n + 1} = 0.
	        \end{align*}
	    \end{enumerate}
	\end{proof}
	
	We will make key use of a result of Schlickewei and Schmidt \cite{SS}, so we use several of their notational conventions in algebraic number theory. Let $\ast$ denote the elementwise product of two tuples, i.e. $(x_1, \dots, x_n) \ast (y_1, \dots, y_n) = (x_1 y_1, \dots, x_n y_n)$. Let $V(F)$ be the set of places of a number field $F$. For each place $v$, we let $|\cdot|_v$ be the associated absolute value and let $F_v$ be the completion of $F$ at $v$. We let $||r||_v = |r|_v^{[F_v : \mathbb{Q}_p] / [F : \mathbb{Q}]}$ where $p$ is the place of $\mathbb{Q}$ that $v$ divides.
	
	We then define the \textit{absolute multiplicative height} $H(x)$:
	
	\begin{defn}
	    For a number field $F$ and a tuple $x = (x_1, \dots, x_n) \in F^n$, the \textit{absolute multiplicative height} $H(x)$ is the product of $\max \{1, ||x_1||_v, \dots, ||x_n||_v\}$ over all places $v$ of $F$. The \textit{logarithmic height} $h(x)$ is equal to $\log H(x)$.
	\end{defn}
	
	As we now show, the function $H$ has a much simpler characterization if we assume that $x$ is rational:
	
	\begin{lem}
	\label{H_fractions}
		Let $x = (\frac{a_1}{b}, \dots, \frac{a_n}{b}) \in \mathbb{Q}^n$ be a tuple of rational numbers such that $a_1, \dots, a_n, b \in \mathbb{Z}$, with $b > 0$ and $\gcd(a_1, \dots, a_n, b) = 1$. Then $H(x) = \max \{|a_1|, \dots, |a_n|, b\}$.
	\end{lem}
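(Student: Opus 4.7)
The plan is to evaluate the defining product place by place, using that $F = \mathbb{Q}$ has only the archimedean place $\infty$ and the $p$-adic places for primes $p$, with $\|r\|_v = |r|_v$ throughout since $[F:\mathbb{Q}] = 1$.

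First I would handle the archimedean factor, which is simply
$$\max\{1, |a_1|/b, \ldots, |a_n|/b\} = \frac{\max\{b, |a_1|, \ldots, |a_n|\}}{b},$$
using $b > 0$. Next I would compute the contribution at each finite prime $p$. Writing $v_p$ for the $p$-adic valuation, we have $|a_i/b|_p = p^{v_p(b) - v_p(a_i)}$, so the factor at $p$ equals $p^{e_p}$ where
$$e_p = \max\bigl\{0,\; v_p(b) - \min_i v_p(a_i)\bigr\}.$$
The coprimality hypothesis $\gcd(a_1, \ldots, a_n, b) = 1$ forces $\min\{v_p(a_1), \ldots, v_p(a_n), v_p(b)\} = 0$ for every prime $p$. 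Thus if $v_p(b) = 0$ then $e_p = 0$, while if $v_p(b) > 0$ then some $v_p(a_i) = 0$ and so $e_p = v_p(b)$. In either case $e_p = v_p(b)$.

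Combining the two, the product over finite places is $\prod_p p^{v_p(b)} = b$, and multiplying by the archimedean factor $\max\{b, |a_1|, \ldots, |a_n|\}/b$ yields $\max\{|a_1|, \ldots, |a_n|, b\}$, as desired. There is no real obstacle here; the only subtlety is making careful use of the coprimality assumption to see that the finite part of the height recovers exactly the denominator $b$, which is why the normalization $\gcd(a_1,\ldots,a_n,b) = 1$ is required in the statement.
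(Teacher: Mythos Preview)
Your proof is correct and follows essentially the same approach as the paper: split the height product into the archimedean factor and the finite-prime factors, use the coprimality hypothesis to show the product over finite primes equals $b$, and multiply by the archimedean contribution $\max\{b,|a_1|,\dots,|a_n|\}/b$. The only cosmetic difference is that you phrase the finite-prime computation via explicit $p$-adic valuations $v_p$, whereas the paper argues directly that some $|a_i/b|_p$ must equal $|1/b|_p$; the content is identical.
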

	\begin{proof}
		Note that every element of $\mathbb{Q}^n$ has precisely one such representation, which may be obtained by finding a common denominator for the components and then dividing out common divisors if any remain. So $x \mapsto \max \{|a_1|, \dots, |a_n|, b\}$ is indeed a function $\mathbb{Q}^n \to \mathbb{R}^+$.
		
		The set of places of $\mathbb{Q}$ is the set $V(\mathbb{Q}) = \{2, 3, \dots, \infty\}$ of primes and infinity. So for such a place $p$ we have $||r||_p = |r|_p^{[\mathbb{Q}_p : \mathbb{Q}_p] / [\mathbb{Q} : \mathbb{Q}]} = |r|_p$. Then:
		
		\begin{align*}
		H(x) =& \prod_{p \in M(\mathbb{Q})} \max \{1, |x_1|_p, \dots, |x_n|_p\} \\
		=& \prod_{p \in M(\mathbb{Q})} \max \left\{1, \left|\frac{a_1}{b}\right|_p, \dots, \left|\frac{a_n}{b}\right|_p\right\}.
		\end{align*}
		
		Now if $p$ is a prime, then $\left|\frac{a_i}{b}\right|_p = \left|\frac{1}{b}\right|_p$ for some $i$; otherwise we would have $p \mid a_1, \dots, a_n, b$, contradicting the greatest common divisor assumption. So $\max \left\{1, \left|\frac{a_1}{b}\right|_p, \dots, \left|\frac{a_n}{b}\right|_p\right\} = \left|\frac{1}{b}\right|_p = p^k$, where $k$ is the exponent on $p$ in the factorization of $b$. Then the product of this expression over all primes $p$ is the product of each maximal prime power dividing $b$, which is just $b$. Thus:
		
		\begin{align*}
		H(x) &= b \cdot \max \left\{1, \left|\frac{a_1}{b}\right|_\infty, \dots, \left|\frac{a_n}{b}\right|_\infty\right\} \\
		&= b \cdot \frac{1}{b} \max \{b, |a_1|, \dots, |a_n|\} \\
		&= \max \{|a_1|, \dots, |a_n|, b\}.
		\end{align*}
	\end{proof}

	\section{A Sparseness Result on Sets of Powers}
	
	We begin by recalling (viii) in Lemma \ref{density_props}, namely that for any $k \in \mathbb{Z}_{>1}$, we have $d(k^\mathbb{N}) = 0$. Moreover, (v), (vi), and (iii) give us that if we shift, scale, and/or take the union of multiple such sets, respectively, the result will still have zero natural density. The goal of this section is to extend this result to general Presburger-definable images of unions of sets of powers.
	
	Note that this property is nontrivial; in fact, a set may be very sparse indeed yet have a Presburger-definable image that is all of $\mathbb{Z}$. One example: consider the set $S$ containing, for each $i \in \mathbb{N}$, the numbers $10^{10^i}$ and $10^{10^i} + i$. Clearly $S$ has zero natural density, and hence the observations of the previous paragraph also apply to $S$. But the Minkowski difference $S - S$, i.e. the image of $S^2$ under the function $(x, y) \mapsto x - y$, contains every integer and hence has natural density $1$. So to state that any Presburger-definable image of a set has zero density is a strictly stronger sparseness condition. The remainder of this section will be focused on proving that unions of sets of powers satisfy this sparseness condition.
	
	We leverage the following:
	
	\begin{prop}[Proposition A of \cite{SS}]
	\label{schlickewei_schmidt}
		Let $F$ be a number field of degree $d$. Let $\Gamma \subset (F^*)^n$ be a finitely generated subgroup of rank $r$. Then the set of all points $y = x \ast z \in F^n$ with $x \in \Gamma$, $z \in (\mathbb{Q}^*)^n$, $y \cdot (1, \dots, 1) = 1$, and $h(z) \leq \frac{1}{4n^2} h(x)$ is contained in the union of not more than $2^{30n^2} (32n^2)^r d^{3r+2n}$ proper linear subspaces of $F^n$.
	\end{prop}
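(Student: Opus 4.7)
The plan is to reduce this to a quantitative form of Schmidt's Subspace Theorem applied to the $n$ coordinate forms $L_i(Y) = Y_i$ together with the linear relation $\sum_{i=1}^n Y_i = 1$ imposed by $y \cdot (1,\dots,1) = 1$. Writing $y_i = x_i z_i$, the equation becomes an $S$-unit equation $\sum_i x_i z_i = 1$ in which the $x_i$ range over coordinates of elements of $\Gamma$ (so they are, by construction, $S$-units for a suitable finite set of places $S$) and the $z_i$ are rational numbers of bounded height relative to the $x_i$.

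First I would fix free generators $g_1, \dots, g_r$ of $\Gamma$ modulo its finite torsion subgroup, partition the problem according to torsion class, and parametrize each torsion-class piece of $\Gamma$ by an exponent vector $(a_1, \dots, a_r) \in \mathbb{Z}^r$. Then I would enlarge $S$ to include all archimedean places of $F$ and every nonarchimedean place at which some coordinate of some $g_i$ fails to be a unit, so that every $x \in \Gamma$ has all coordinates in the $S$-unit group $\mathcal{O}_{F,S}^*$. The key estimate to verify is the product inequality
\[
\prod_{v \in S} \prod_{i=1}^n \frac{\|L_i(y)\|_v}{\|y\|_v} \;\leq\; H(y)^{-\delta}
\]
for an explicit $\delta > 0$, which feeds the Subspace Theorem and yields a finite union of proper subspaces containing all solutions.

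The main obstacle, and the heart of the argument, is exploiting the hypothesis $h(z) \leq \frac{1}{4n^2} h(x)$ to verify that product inequality. The point is that the $S$-unit structure of the $x_i$ forces the non-archimedean contributions to $\prod_v \prod_i \|x_i\|_v$ to telescope via the product formula, so the bulk of the height of $y$ comes from the archimedean places and is controlled by $H(x)$. The ratio $H(z)^n / H(x)$ is at most $H(x)^{n/(4n^2) - 1} = H(x)^{-1 + 1/(4n)}$, which is comfortably smaller than $H(y)^{-\delta}$ for a fixed small $\delta$ depending only on $n$. Once this is in place, the explicit constant $2^{30n^2}(32n^2)^r d^{3r+2n}$ is obtained by plugging into the sharpest available quantitative Subspace Theorem of Evertse--Schlickewei--Schmidt: the $(32n^2)^r$ factor tracks the rank contribution from choosing generators, the $d^{3r+2n}$ factor tracks the degree of $F$ in both the $S$-unit parametrization and the ambient $F^n$, and the $2^{30n^2}$ factor absorbs the combinatorial count of admissible subspace configurations. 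Tightening each constant would be routine but tedious; the conceptual work is the height-ratio inequality above.
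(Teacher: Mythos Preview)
The paper does not prove this proposition at all: it is quoted verbatim as Proposition~A of Schlickewei--Schmidt \cite{SS} and used as a black box in the proof of Lemma~\ref{diophantine_density}. There is therefore no proof in the paper to compare your proposal against.

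That said, your outline is broadly in the right spirit for how results of this type are established---one does ultimately feed a carefully arranged system of linear forms into a quantitative Subspace Theorem, and the height condition $h(z) \le \frac{1}{4n^2} h(x)$ is precisely what makes the requisite product inequality go through. But a few cautions are in order. First, your appeal to ``the sharpest available quantitative Subspace Theorem of Evertse--Schlickewei--Schmidt'' is circular in this context: the Schlickewei--Schmidt paper you would be citing \emph{is} the source of the proposition, and their argument develops the needed subspace-counting machinery internally rather than invoking a later packaged statement. Second, the step where you pass from $H(z)^n/H(x) \le H(x)^{-1+1/(4n)}$ to the Subspace-Theorem inequality $\prod_{v,i} \|L_i(y)\|_v / \|y\|_v \le H(y)^{-\delta}$ is the genuinely delicate part and is not as automatic as your sketch suggests; one has to control $H(y)$ in terms of $H(x)$ and track the places outside $S$ carefully. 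Third, the explicit constant is not obtained by ``plugging in'' but by a rather intricate induction and geometry-of-numbers argument in the original paper. If your goal is to supply a proof here, you should either reproduce the Schlickewei--Schmidt argument in detail or, as the present paper does, simply cite it.
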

	
	Using this proposition, we will prove the following number-theoretic result:
	
	\begin{lem}
	\label{diophantine_density}
		Let $k_1, \dots, k_n$ be nonzero rational numbers, and let $a_1, \dots, a_n \in \mathbb{Z}_{>1}$. Then the set $C$ of integers $c$ for which there exist $e_1, \dots, e_n \in \mathbb{N}$ satisfying:
		
		\begin{equation}
		\label{defining_equation}
			k_1 a_1^{e_1} + \dots + k_n a_n^{e_n} = c
		\end{equation}
		
		has zero natural density.
	\end{lem}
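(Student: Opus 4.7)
The plan is to proceed by induction on $n$, with Proposition \ref{schlickewei_schmidt} doing the essential work in the inductive step. The base case $n = 1$ is immediate: $C = k_1 \cdot a_1^{\mathbb{N}}$ is a scaled geometric sequence, and Lemma \ref{density_props} parts (vi) and (viii) give $d(C) = 0$.

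For the inductive step, fix $n > 1$; after clearing denominators I may assume the $k_i$ are integers (scaling $c$ by a fixed nonzero integer preserves zero density via Lemma \ref{density_props}(vi)). Set $M(\vec{e}) := \max_i a_i^{e_i}$. I would partition $C \cap [-N, N]$ according to whether some valid representation $c = \sum_i k_i a_i^{e_i}$ satisfies $M(\vec{e}) < |c|^{4n^2}$ (the tame regime) or $M(\vec{e}) \geq |c|^{4n^2}$ (the cancellation-heavy regime). In the tame regime, every $e_i$ is bounded by $4n^2 \log N / \log a_i$, so there are only $O_n((\log N)^n)$ exponent tuples and hence at most that many such $c$, giving zero density contribution.

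The cancellation-heavy regime is where Proposition \ref{schlickewei_schmidt} applies. Dividing the defining equation through by $c$, put $y_i = k_i a_i^{e_i}/c$, $x_i = a_i^{e_i}$, and $z_i = k_i/c$, so that $y = x \ast z$, $y \cdot (1,\dots,1) = 1$, and $x$ lies in the rank-$n$ subgroup $\Gamma \subset (\mathbb{Q}^*)^n$ generated by the coordinate embeddings of the $a_i$. Lemma \ref{H_fractions} gives $h(x) = \log M(\vec{e})$ and, for $|c|$ large relative to the $k_i$, $h(z) = \log|c| + O(1)$; thus the threshold $M(\vec{e}) \geq |c|^{4n^2}$ (up to a fixed multiplicative constant) is precisely the hypothesis $h(z) \leq \frac{1}{4n^2} h(x)$ of Proposition \ref{schlickewei_schmidt}. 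The proposition then confines $(y_1,\dots,y_n)$ to a finite union of proper linear subspaces of $\mathbb{Q}^n$, each of which yields a nontrivial relation $\sum_i \alpha_i k_i a_i^{e_i} = 0$. Choosing any $i^*$ with $\alpha_{i^*} \neq 0$ and eliminating $k_{i^*} a_{i^*}^{e_{i^*}}$ from the original equation yields $c = \sum_{i \neq i^*} k_i' a_i^{e_i}$ for new rational coefficients $k_i'$; by the inductive hypothesis applied to the (at most $n-1$) indices with $k_i' \neq 0$, the contribution from each subspace has density zero, and a finite union closes the case.

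The main obstacle will be the calibration of the height inequality: making Proposition \ref{schlickewei_schmidt}'s condition $h(z) \leq \frac{1}{4n^2} h(x)$ match the clean threshold $M(\vec{e}) \geq |c|^{4n^2}$ requires care with the normalization in Lemma \ref{H_fractions} when the $k_i$ are rational rather than integer, plus a separate bounded-regime argument for $|c|$ small compared to the $k_i$ (only finitely many such $c$, contributing nothing to density). A secondary point is verifying that the linear elimination genuinely reduces the number of active summands, i.e.\ that one can always single out some $\alpha_{i^*} \neq 0$ and that the degenerate case in which all $k_i'$ vanish forces $c = 0$, again a density-zero contribution.
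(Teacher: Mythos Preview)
Your proposal is correct and follows essentially the same approach as the paper: induction on $n$, splitting into a counting regime (your ``tame'' case, the paper's $S_2$) and a subspace regime (your ``cancellation-heavy'' case, the paper's $S_1$) according to the height inequality in Proposition~\ref{schlickewei_schmidt}, then eliminating a variable via the resulting linear dependence and invoking the inductive hypothesis. The only cosmetic differences are that you clear denominators up front while the paper keeps the $k_i$ rational and carries a constant $b$ through the height estimates, and that your claim $h(z) = \log|c| + O(1)$ should really be the one-sided bound $h(z) \leq \log|c|$ for $|c| > \max_i|k_i|$ (which is all that is needed).
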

	\begin{proof}
	    We will prove this claim via induction; first, assume $n = 1$. Then $C$ is simply the set $\{k_1 a_1^e : e \in \mathbb{Z}, e \geq 0\}$, which clearly has zero natural density. For the remainder of this proof we then assume $n > 1$ and assume the lemma is true with $n$ replaced by any smaller positive integer.
	    
	    We let $\Gamma \subseteq (\mathbb{Q^*})^n$ be the subgroup of elements of $(\mathbb{Q^*})^n$ where each component is a power of the corresponding $a_i$; the rank of $\Gamma$ is $n$. We let $F = \mathbb{Q}$. Under these conditions, the statement of Proposition \ref{schlickewei_schmidt} becomes: the set of all points $y = x \ast z \in \mathbb{Q}^n$ with $x \in \Gamma$, $z \in \mathbb{Q}^n$, $y \cdot (1, \dots, 1) = 1$, and $h(z) \leq \frac{1}{4n^2} h(x)$ is contained in the union of not more than $2^{30n^2} (32n^2)^n$ proper linear subspaces of $\mathbb{Q}^n$. We will also note that the condition $h(z) \leq \frac{1}{4n^2} h(x)$ is equivalent to $H(z)^{4n^2} \leq H(x)$.
	    
	    Let $S$ be the set of all tuples $(x, c)$ such that $x$ is of the form $(a_1^{e_1}, \dots, a_n^{e_n})$ for $e_1, \dots, e_n \in \mathbb{N}$ and such that (\ref{defining_equation}) holds. For $S' \subseteq S$ we write $\pi(S')$ for the projection of $S'$ onto the last coordinate, which is a subset of $\mathbb{Z}$; in particular $\pi(S) = C$. We will cover $S$ by subsets and show that the projection of each subset has upper natural density zero.
	    
	    In particular, let $S_1$ be the set of $(x, c) \in S$ such that $H((\frac{k_1}{c}, \dots, \frac{k_n}{c}))^{4n^2} \leq H(x)$, and let $S_2 = S \setminus S_1$. Proposition \ref{schlickewei_schmidt} then gives us that there exist finitely many proper linear subspaces of $\mathbb{Q}^n$ such that for each $(x, c) \in S_1$,  $x \ast (\frac{k_1}{c}, \dots, \frac{k_n}{c})$ lies in one of these subspaces. For each such subspace $P$ we let $S_P$ denote the subset containing $(x, c) \in S_1$ such that $x \ast (\frac{k_1}{c}, \dots, \frac{k_n}{c}) \in P$.
	    
	    Fix some $P$. Then there exists a nontrivial linear dependence that all points in $P$ satisfy; i.e. there exists a nonzero vector $d = (d_1, \dots, d_n)$ such that for all $p \in P$ we have $d \cdot p = 0$. For each $(x, c) \in S_P$ we thus have that:
	    
	    \begin{equation}
		\label{linear_dependence}
	        d_1 \frac{k_1}{c} a_1^{e_1} + \dots + d_n \frac{k_n}{c} a_n^{e_n} = 0.
	    \end{equation}
	    
	    Without loss of generality (reordering coordinates), we now assume $d_n \neq 0$. We rearrange (\ref{linear_dependence}) and isolate $k_n a_n^{e_n}$:
	    
	    \begin{equation}
		\label{linear_dependence_solved}
	        k_n a_n^{e_n} = -\frac{d_1 k_1}{d_n} a_1^{e_1} - \dots - \frac{d_{n-1} k_{n-1}}{d_n} a_{n-1}^{e_{n-1}}.
	    \end{equation}
	    
	    We may then plug (\ref{linear_dependence_solved}) into (\ref{defining_equation}) and combine:
	    
	    \begin{equation}
		\label{defining_after_linear}
	        \left(k_1 - \frac{d_1 k_1}{d_n}\right) a_1^{e_1} + \dots + \left(k_{n-1} - \frac{d_{n-1} k_{n-1}}{d_n}\right) a_{n-1}^{e_{n-1}} = c.
	    \end{equation}
	    
	    Letting $k'_i = k_i - \frac{d_i k_i}{d_n}$ gives us:
	    
	    \begin{equation}
		\label{defining_n_minus_1}
	        k'_1 a_1^{e_1} + \dots + k'_{n-1} a_{n-1}^{e_{n-1}} = c.
	    \end{equation}
	    
	    If any $k'_i$ equals zero, that term may be disregarded; this equation then either simplifies to $0 = c$ (in which case there are no solutions) or else satisfies the conditions of the lemma for fewer terms than $n$, allowing us to apply the inductive hypothesis and conclude that $\pi(S_P)$ has zero natural density. So because $\pi(S_1)$ is a finite union of $\pi(S_P)$, $\pi(S_1)$ also has zero natural density.
	    
	    We now consider $S_2$ and let $h \in \mathbb{Z}^+$; assume $h > \max a_i$. Let $b$ be the smallest positive integer such that $k_1 b, \dots, k_n b$ are all integers, let $m = \max \{|k_1|, \dots, |k_n|\}$, and let $S_h \subseteq S_2$ contain only those $(x, c)$ where $h \geq |c| > m$.
	    
	    Let $(x, c) \in S_h$. Then $\frac{|k_i|}{c}$ is at most $1$ for each $i$, so by Lemma \ref{H_fractions}, $H((\frac{k_1}{c}, \dots, \frac{k_n}{c}))$ is the lowest common denominator of the $\frac{k_i}{c}$. Because $\frac{bk_i }{bc}$ is a ratio of two integers, we must have that $H((\frac{k_1}{c}, \dots, \frac{k_n}{c})) \mid bc$, in particular $H((\frac{k_1}{c}, \dots, \frac{k_n}{c})) \leq |bc|$. Therefore $|bc|^{4n^2} > H(x)$, and hence:
	    
	    \begin{equation}
	    \label{bh_bound_h_x}
	        (bh)^{4n^2} > H(x)
	    \end{equation}
	    
	    By Lemma \ref{H_fractions}, because $x$ has positive integer components, $H(x)$ is the maximum of these components. So (\ref{bh_bound_h_x}) tells us that no component in $x$ exceeds $(bh)^{4n^2}$.
	    
	    So if $(x, c) \in S_h$, then no component in $x$ can exceed $(bh)^{4n^2}$. Let $N$ be the number of such $x$; then by counting:
	    
	    \begin{align*}
	    N \leq& \prod_{i=1}^n \left\lceil \log_{a_i} (bh)^{4n^2} \right\rceil \\
	    =& \prod_{i=1}^n \left\lceil 4n^2 \log_{a_i} (bh) \right\rceil \\
	    \leq& \prod_{i=1}^n \left(4n^2 \log_{a_i} (bh) + 1 \right) \\
	    \intertext{and, because $h > \max a_i$:}
	    \leq& \prod_{i=1}^n \left(4n^2 \log_{a_i} (bh) + \log_{a_i} h \right) \\
	    \leq& \prod_{i=1}^n \left(5n^2 \log_{a_i} (bh) \right) \\
	    \leq& \prod_{i=1}^n \left(5n^2 \log_2 (bh) \right)\\
	    =& (5n^2 \log_2 (bh))^n.
	    \end{align*}
	    
	    Now note that by (\ref{defining_equation}), $c$ may be uniquely determined by $x$, so it follows that $|S_h| \leq (5n^2 \log_2 (bh))^n$. Therefore $|\pi(S_h)| \leq (5n^2 \log_2 (bh))^n$ as well. Thus:
	    
	    \begin{align*}
	    & \limsup_{h \to \infty} \frac{|\pi(S_2) \cap \{-h, \dots, h\}|}{2h+1} \\
	    =& \limsup_{h \to \infty} \frac{|\pi(S_2) \cap \{-h, \dots, h\}| - (2m+1)}{2h+1} \\
	    \leq& \limsup_{h \to \infty} \frac{|\pi(S_2) \cap \{-h, \dots, h\} \setminus \{-m, \dots, m\}|}{2h+1} \\
	    =& \limsup_{h \to \infty} \frac{|\pi(S_h)|}{2h+1} \\
	    \leq& \limsup_{h \to \infty} \frac{(5n^2 \log_2 (bh))^n}{2h+1} \\
	    =& \; 0.
	    \end{align*}
	    
	    So the upper natural density of $\pi(S_2)$ is zero (obviously it cannot be negative).
	    
	    Therefore $C = \pi(S) = \pi(S_1) \cup \pi(S_2)$ also has zero natural density, concluding the proof.
	\end{proof}
	
	This is not difficult to generalize to the condition on Presburger images that is the goal of this section:
	
	\begin{thmB}
        Let $E = \bigcup_i a_i^\mathbb{N}$, where $a_i \in \mathbb{Z}_{>1}$ for $1 \leq i \leq n$, and let $f : \mathbb{Z}^M \to \mathbb{Z}$ be a Presburger-definable function. Then $f(E^M)$ has zero natural density.
    \end{thmB}
	\begin{proof}
	    By (iii) in Lemma \ref{density_props}, it will suffice to cover the image of $f$ by sets of zero natural density.
	    
	    By Theorem 4.1 of \cite{ZP}, any Presburger-definable $f$ is piecewise linear, in the sense that its domain $E^M$ can be partitioned into finitely many sets such that the restriction of $f$ to each may be written as an affine function with rational coefficients. So it suffices to show that the image of $E^M$ under an affine function with rational coefficients has zero natural density, i.e. without loss of generality we may assume $f$ is affine with rational coefficients. In fact, we may further assume that the coefficients are integers and that the constant term is zero, because multiplication or translation by a constant integer does not affect whether the natural density of a set is zero by (v) and (vi) of Lemma \ref{density_props}. Lastly, we may assume $f$ actually depends on every argument, as otherwise we may simply treat $f$ as having a lower-dimensional domain.
	    
	    So the only case we need worry about is the case where $f$ is a linear function (with zero constant term) with nonzero integer coefficients. We write $f(x) = k_1 x_1 + \dots + k_M x_M$ (here $x = (x_1, \dots, x_M)$). Let $(j_i)_i$ be a sequence of $M$ numbers each between $1$ and $n$, inclusive. We note that there are only finitely many such sequences ($n^M$ precisely). Then consider the subset $S$ of the domain of $f$ in which $x_i \in (a_{j_i})^\mathbb{N}$ for each $i$. We note that the domain of $f$, $E^M$, is the union of these subsets over each sequence $(j_i)_i$.
	    
	    So it suffices to show that $f(S) = \{k_1 x_1 + \dots + k_M x_M : x \in S\}$ has zero natural density; but this is Lemma \ref{diophantine_density}.
	\end{proof}
	
	\section{A Dichotomy for Certain Expansions of Presburger Arithmetic}
	
	In this section, we aim to adapt a result of \cite{FM}, in which the authors prove that an expansion of the real ordered group (or any o-minimal expansion thereof) by a set $E$ with sufficiently sparse images gives rise to a dichotomy in the sparseness of its definable sets. Here we will show a similar result for expansions of Presburger arithmetic on the integers. Let $E \subseteq \mathbb{Z}$.
	
	The approach in \cite{FM} hinges on defining two collections $\mathcal{T}_n$ and $\mathcal{S}_n$ of subsets of $\mathbb{R}^n$. Here we do the same, defining collections of subsets of $\mathbb{Z}^n$. Our eventual goal is to utilize a form of cell decomposition for Presburger arithmetic in order to build any set definable in $(\mathbb{Z}, <, +, E)$ out of simpler pieces. In essence, $\mathcal{T}_n$ will contain the sets ``definable'' from subsets of $E$ via a particular union-of-intersections expression (but we put ``definable'' in quotes because the union and intersection are arbitrary), while $\mathcal{S}_n$ will contain specifically those sets in $\mathcal{T}_n$ formed from a \textit{weak cell,} a generalization of the notion of a Presburger cell that we will also define.
	
	The collections $\mathcal{S}_n$ and $\mathcal{T}_n$, as in \cite{FM}, are defined to satisfy four simultaneous claims, from which Theorem C will follow:
	
	\begin{enumerate}
	    \item $\mathcal{T}_n$ is a Boolean algebra.
	    \item Every element of $\mathcal{T}_n$ is a finite union of elements of $\mathcal{S}_n$.
	    \item The projection of a set in $\mathcal{S}_{n+1}$ onto the first $n$ coordinates is a set in $\mathcal{T}_n$.
	    \item Suppose that $A \in \mathcal{S}_{1}$ does not contain arbitrarily long pieces of the same arithmetic progression. Then there exist $M$ and $h : \mathbb{Z}^{M} \to \mathbb{Z}$ Presburger such that $A \subseteq h(E^M)$.
	\end{enumerate}
	
	The majority of this section will thus contain definitions of $\mathcal{T}_n$ and $\mathcal{S}_n$ and use them to prove these four claims. In order to do so, we will first define some notation previously used in \cite{FM}. For $X \subseteq \mathbb{Z}^{m+n}$ and $u \in \mathbb{Z}^m$, we let $X_u$ denote the fiber $\{x \in \mathbb{Z}^n : (u, x) \in X\}$. We will also make use of the \textit{diamond product} of two sets of integers:
	
	\begin{defn}
	    Given $A \subseteq \mathbb{Z}^l \times \mathbb{Z}^n$ and $B \subseteq \mathbb{Z}^m \times \mathbb{Z}^n$, we let
	
	    $$A \diamond B = \{(x, y, z) \in \mathbb{Z}^{l+m+n} : (x, z) \in A \wedge (y, z) \in B\}.$$
	
	    Let $u \in \mathbb{Z}^{l+n}$ and $v \in \mathbb{Z}^{m+n}$ be such that their last $n$ coordinates agree; then $\langle u, v \rangle$ denotes the tuple $(u_1, \dots, u_l, v) \in \mathbb{Z}^{l+m+n}$.
	\end{defn}
	
	Finally, we define a notation for certain expansions of first-order structures on the integers:
	
	\begin{defn}
	    Given a subset $E \subseteq \mathbb{Z}$, the notation $(\mathbb{Z}, <, +, E)^\#$ denotes $(\mathbb{Z}, <, +, (S))$, where $S$ ranges over all subsets of $E^k$ for $k \in \mathbb{N}$.
	\end{defn}
	
	Now we define $\mathcal{T}_n$:
	
	\begin{defn}
		Given $A \subseteq \mathbb{Z}^n$, we say $A \in \mathcal{T}_n$ iff there exist $m \in \mathbb{N}$, $X \subseteq \mathbb{Z}^{m + n}$ Presburger, and an indexed family $(P_\alpha)_{\alpha \in I}$ of subsets of $E^m$ such that $A = \bigcup_{\alpha \in I} \bigcap_{u \in P_\alpha} X_u$.
	\end{defn}
	
	Every Presburger subset of $\mathbb{Z}^n$ is in $\mathcal{T}_n$. Moreover, given $m \in \mathbb{N}, P \subseteq E^m,$ and Presburger functions $f_1, \dots, f_m : \mathbb{Z}^n \to \mathbb{Z}$, the set
	
	$$\{x \in \mathbb{Z}^n : (f_1(x), \dots, f_m(x)) \in P\}$$
	
	is in $\mathcal{T}_n$, because it is the union of all sets of the form
	
	$$\{x \in \mathbb{Z}^n : f_1(x) = u_1 \wedge \dots \wedge f_m(x) = u_m\}$$
	
	for $(u_1, \dots, u_m) \in P$.
	
	\begin{clm}
	\label{t_n_boolean_algebra}
		$\mathcal{T}_n$ is a Boolean algebra.
	\end{clm}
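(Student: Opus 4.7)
The plan is to verify that $\mathcal{T}_n$ is closed under complements, finite unions, and finite intersections, and contains $\emptyset$ and $\mathbb{Z}^n$. The last two are immediate: $I=\emptyset$ gives $\emptyset$, and any $|I|=1$ with $P_\alpha=\emptyset$ gives $\bigcap_{u\in\emptyset}X_u=\mathbb{Z}^n$. In the degenerate cases $|E|\le 1$, the set $E^m$ has at most one element for every $m$, and only one Presburger set $X$ is permitted per witness, so $\mathcal{T}_n$ reduces to the Boolean algebra of Presburger-definable subsets of $\mathbb{Z}^n$. I may therefore assume $|E|\ge 2$ and fix distinct $e_0,e_1\in E$.

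For complements, given $A=\bigcup_{\alpha\in I}\bigcap_{u\in P_\alpha}X_u$, De Morgan yields $A^c=\bigcap_\alpha\bigcup_{u\in P_\alpha}X_u^c$. If some $P_\alpha$ is empty then $A=\mathbb{Z}^n$, so $A^c=\emptyset\in\mathcal{T}_n$; otherwise, distributing intersection over union gives $A^c=\bigcup_{f\in\prod_\alpha P_\alpha}\bigcap_\alpha X_{f(\alpha)}^c$. Reindexing by $Q_f:=\{f(\alpha):\alpha\in I\}\subseteq E^m$ and taking $Y:=\mathbb{Z}^{m+n}\setminus X$ (Presburger) yields $A^c=\bigcup_f\bigcap_{u\in Q_f}Y_u$, which is in the required form.

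For unions and intersections, the main obstacle is that the definition of $\mathcal{T}_n$ permits only a single Presburger set $X$. Given $A,B\in\mathcal{T}_n$ witnessed by $(X,m_A,\{P_\alpha\}_{\alpha\in I})$ and $(Y,m_B,\{Q_\beta\}_{\beta\in J})$, set $m=\max(m_A,m_B)$, pad $X$ and $Y$ by appending ignored coordinates so both live in $\mathbb{Z}^{m+n}$, and extend each $P_\alpha,Q_\beta$ with $e_0$'s to match. Now introduce a flag coordinate and define the Presburger set
$$W:=\{(u,e_0,x):(u,x)\in X\}\cup\{(u,e_1,x):(u,x)\in Y\}\subseteq\mathbb{Z}^{(m+1)+n},$$
so that $W_{(u,e_0)}=X_u$ and $W_{(u,e_1)}=Y_u$. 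For the union, index by $I\sqcup J$ with $R_\alpha=P_\alpha\times\{e_0\}$ for $\alpha\in I$ and $R_\beta=Q_\beta\times\{e_1\}$ for $\beta\in J$, giving $A\cup B=\bigcup_\gamma\bigcap_{w\in R_\gamma}W_w$. For the intersection, index by $I\times J$ with $R_{(\alpha,\beta)}=(P_\alpha\times\{e_0\})\cup(Q_\beta\times\{e_1\})$, giving $A\cap B=\bigcup_{(\alpha,\beta)}\bigcap_{w\in R_{(\alpha,\beta)}}W_w$. The naive alternatives $X\cup Y$ or $X\cap Y$ do not yield the correct fibers; the flag trick resolves the mismatch at the cost of one extra $E$-coordinate, and relies only on the mild assumption $|E|\ge 2$, which we have already disposed of in the degenerate cases.
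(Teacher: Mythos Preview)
Your proof is correct, but you combine the two witness sets differently than the paper does. For intersection, the paper uses the diamond product $X \diamond Y \subseteq \mathbb{Z}^{l+m+n}$, observing that $(X \diamond Y)_{(u,v)} = X_u \cap Y_v$, so
\[
A \cap B = \bigcup_{(\alpha,\beta)\in I\times J}\ \bigcap_{(u,v)\in P_\alpha\times Q_\beta} (X \diamond Y)_{(u,v)},
\]
and then gets union from De Morgan. Your flag-coordinate construction achieves the same effect: both methods embed the two parameter spaces into a single larger one so that a single Presburger set has the right fibers. The diamond product is a bit cleaner here because it works uniformly without the case split on $|E|\le 1$, and because the paper has already set up $\diamond$ and will reuse it for weak cells (Lemma \ref{weak_cell_product}) and in the decomposition lemma (Lemma \ref{technical_union}). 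Your approach has the minor advantage of handling union directly rather than via complementation. One small wording issue: when you ``append ignored coordinates'' to $X$ and $Y$, make explicit that they go into the parameter block (between the $u$-coordinates and the $x$-coordinates), so that the fiber $X'_u$ is well defined for $u\in\mathbb{Z}^m$; as written, ``appending'' could be read as placing them after the $x$-coordinates.
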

	\begin{proof}
		Let $A \in \mathcal{T}_n$. Then there exist $m, X \subseteq \mathbb{Z}^{m + n}$ Presburger, and an indexed family $(P_\alpha)_{\alpha \in I}$ of subsets of $E^m$ such that $A = \bigcup_{\alpha \in I} \bigcap_{u \in P_\alpha} X_u$. So:
		
		$$\mathbb{Z}^n \setminus A = \bigcap_{\alpha \in I} \bigcup_{u \in P_\alpha} (\mathbb{Z}^n \setminus X)_u$$
		
		and we can distribute the intersection over the union, giving some $K \subseteq \prod_{\alpha \in I} P_\alpha$ such that
		
		$$\mathbb{Z}^n \setminus A = \bigcup_{\gamma \in K} \bigcap_{u \in \gamma(I)} (\mathbb{Z}^n \setminus X)_u.$$
		
		Therefore $\mathcal{T}_n$ is closed under complementation.
		
		Similarly, let $B \in \mathcal{T}_n$. Then there exist $l$, $Y \subseteq \mathbb{Z}^{l+n}$ Presburger, and a family $(Q_\beta)_{\beta \in J}$ of subsets of $E^l$ such that $B = \bigcup_{\beta \in J} \bigcap_{v \in Q_\beta} Y_v$. Then:
		
		$$A \cap B = \bigcup_{(\alpha, \beta) \in I \times J} \bigcap_{(u, v) \in P_\alpha \times Q_\beta} (X \diamond Y)_{(u, v)}.$$
		
		Therefore $\mathcal{T}_n$ is closed under intersection. Closure under union of course follows from de Morgan's laws.
	\end{proof}
	
	As mentioned above, we will need the concept of a \textit{weak cell} in order to construct $\mathcal{S}_n$. As \cite{FM} mentions, the notion of a cell used for cell decomposition is very strict, defined inductively. This works well when we are showing that definable sets can be broken into simple pieces, but the standard definition is too restrictive to use here. So instead we effectively allow a weak cell in $n+1$ dimensions to comprise any definable set in the first $n$ dimensions and to be defined by its relationship to Presburger functions in the $(n+1)$-st dimension:
	
	\begin{defn}
		A \textit{weak cell} in $\mathbb{Z}^{n+1}$ is a set of one of the following forms:
		
		\begin{enumerate}[(i)]
			\item $S \times \{t \in \mathbb{Z} : t \equiv k \pmod{N}\}$
			\item $\{(x, t) \in \mathbb{Z}^{n+1} : x \in S, f(x) \leq t, t \equiv k \pmod{N}\}$
			\item $\{(x, t) \in \mathbb{Z}^{n+1} : x \in S, t \leq g(x), t \equiv k \pmod{N}\}$
			\item $\{(x, t) \in \mathbb{Z}^{n+1} : x \in S, f(x) \leq t \leq g(x), t \equiv k \pmod{N}\}$
		\end{enumerate}
	
		where $S \subseteq \mathbb{Z}^n$ and $f, g : \mathbb{Z}^n \to \mathbb{Z}$ are Presburger and $k, N \in \mathbb{Z}$.
	\end{defn}
	
	\begin{lem}
	\label{weak_cell_product}
		Let $A, B \subseteq \mathbb{Z}^m \times \mathbb{Z}^{n+1}$ be weak cells. Then $A \diamond B$ is a weak cell in $\mathbb{Z}^{2m+n+1}$.
	\end{lem}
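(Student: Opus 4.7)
The plan is to unpack the data that defines $A$ and $B$ as weak cells and then assemble the corresponding data for $A \diamond B$ coordinate by coordinate. Write $A$ via a base $S_A \subseteq \mathbb{Z}^{m+n}$, a congruence $t \equiv k_A \pmod{N_A}$, and optionally a Presburger lower bound $f_A$ and/or upper bound $g_A$; write $B$ similarly via $S_B, k_B, N_B, f_B, g_B$. Designating the last coordinate of $\mathbb{Z}^{m+n+1}$ as $t$ and the others as $(x,w)$ or $(y,w)$ with $x,y \in \mathbb{Z}^m$ and $w \in \mathbb{Z}^n$, membership of a point $(x,y,w,t) \in \mathbb{Z}^{2m+n+1}$ in $A \diamond B$ unfolds directly into the conjunction of the conditions defining $A$ at $(x,w,t)$ and those defining $B$ at $(y,w,t)$.

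I would then take the base of the new weak cell to be $S := \{(x,y,w) \in \mathbb{Z}^{2m+n} : (x,w) \in S_A \text{ and } (y,w) \in S_B\}$, which is permissible because the definition places no restriction on the base of a weak cell. The combined lower bound is $f(x,y,w) := \max(f_A(x,w), f_B(y,w))$, using whichever of $f_A, f_B$ is actually present, and the combined upper bound is $g(x,y,w) := \min(g_A(x,w), g_B(y,w))$ analogously. These are Presburger because the graph of $\max$ admits the Presburger-definable description $u = \max(v_1,v_2) \iff (u = v_1 \vee u = v_2) \wedge u \ge v_1 \wedge u \ge v_2$, and similarly for $\min$, so composing with the evaluations of $f_A, f_B, g_A, g_B$ (which are themselves Presburger) keeps us inside the Presburger-definable functions. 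Finally, the two congruence conditions $t \equiv k_A \pmod{N_A}$ and $t \equiv k_B \pmod{N_B}$ are jointly satisfiable if and only if $\gcd(N_A,N_B) \mid k_A - k_B$, and in that case the Chinese Remainder Theorem collapses them into a single congruence $t \equiv k \pmod{\operatorname{lcm}(N_A,N_B)}$ for an appropriate $k$; if they are incompatible, $A \diamond B = \emptyset$, which is already a weak cell of type (i) with empty base.

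A case analysis on the types (i)--(iv) of $A$ and $B$ — which effectively reduces to tracking whether at least one side contributes a lower bound and whether at least one side contributes an upper bound — then identifies $A \diamond B$ as a weak cell of the appropriate type (type (iv) when both a lower and an upper bound survive, types (ii) or (iii) when only one survives, and type (i) when neither does). The argument is largely a bookkeeping exercise; the only conceptual checks are that $\max$ and $\min$ of Presburger functions remain Presburger and that the congruence classes can be merged via CRT, with the inconsistent case absorbed into the degenerate weak cell $\emptyset$. I expect the main obstacle to be setting up the notation cleanly enough to avoid drowning in the $4 \times 4$ case split, rather than any deeper mathematical difficulty.
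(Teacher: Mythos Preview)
Your approach is correct and essentially identical to the paper's: the paper takes $S_C = S_A \diamond S_B$, combines the bounds via $\max$/$\min$ (using the symbols $\pm\infty$ in place of absent bounds to collapse your $4\times 4$ case split into a single uniform definition), and merges the congruences via the Chinese Remainder Theorem, treating the incompatible case as the empty weak cell. One small correction: the definition \emph{does} require the base $S$ to be Presburger, contrary to your remark that it ``places no restriction on the base'' --- but since $S_A$ and $S_B$ are Presburger, so is your $S = S_A \diamond S_B$, and the argument is unaffected.
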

	\begin{proof}
		Let $f_A, g_A, f_B, g_B$ each be either a Presburger function $\mathbb{Z}^{m+n} \to \mathbb{Z}$ or one of the symbols $-\infty, \infty$. We say that $f_A(x) \leq t$ if either $f_A$ is a Presburger function and the relation holds as written, or else $f_A$ is $-\infty$ and $t$ is any integer. We interpret $f_B(x) \leq t, t \leq g_A(x), t \leq g_B(x)$ analogously. Then we may write:
		
		$$A = \{(x, t) \in \mathbb{Z}^{m+n+1} : x \in S_A, f_A(x) \leq t \leq g_A(x), t \equiv k_A \pmod{N_A}\}$$
		$$B = \{(x, t) \in \mathbb{Z}^{m+n+1} : x \in S_B, f_B(x) \leq t \leq g_B(x), t \equiv k_B \pmod{N_B}\}$$
		
		for $f_A, g_A, f_B, g_B$ as above, $S_A, S_B \subseteq \mathbb{Z}^{m+n}$ Presburger, and $k_A, N_A, k_B, N_B \in \mathbb{Z}$.
		
		Then we have that:
		
		$$A \diamond B = \{(\langle x, y \rangle, t) \in \mathbb{Z}^{2m+n+1} : \langle x, y \rangle \in S_A \diamond S_B, f_A(x) \leq t \leq g_A(x), f_B(y) \leq t \leq g_B(y),$$
		$$t \equiv k_A \pmod{N_A}, t \equiv k_B \pmod{N_B}\}$$
		
		We will now define a weak cell $C$:
		
		$$C = \{(x, t) \in \mathbb{Z}^{2m+n+1} : x \in S_C, f_C(x) \leq t \leq g_C(x), t \equiv k_C \pmod{N_C}\}$$
		
		Let $S_C = S_A \diamond S_B$. Let $f_A^\diamond$ map $\langle x, y \rangle$ to $f_A(x)$ (or if $f_A$ is a symbol, $f_A^\diamond$ is the same symbol), and let $g_A^\diamond, f_B^\diamond, g_B^\diamond$ be defined analogously. Then we may let:
		
		$$f_C = \left\{ \begin{array}{cc}
		    \max f_A^\diamond, f_B^\diamond & \text{if $f_A, f_B$ are both functions} \\
		    \infty & \text{if $f_A = \infty$ or $f_B = \infty$} \\
		    f_A^\diamond & \text{if $f_B = -\infty$} \\
		    f_B^\diamond & \text{if $f_A = -\infty$}
		    \end{array} \right.$$
		    
		and:
		
		$$g_C = \left\{ \begin{array}{cc}
		    \min g_A^\diamond, g_B^\diamond & \text{if $g_A, g_B$ are both functions} \\
		    -\infty & \text{if $g_A = -\infty$ or $g_B = -\infty$} \\
		    g_A^\diamond & \text{if $g_B = \infty$} \\
		    g_B^\diamond & \text{if $g_A = \infty$}.
		    \end{array} \right.$$
		
		Lastly, let $k_C, N_C$ be such that $t \equiv k_C \pmod{N_C} \iff t \equiv k_A \pmod{N_A} \wedge t \equiv k_B \pmod{N_B}$. (We assume the latter condition is not impossible; if it is, then $A \diamond B$ is empty, and the empty set is trivially a weak cell.) The existence of such $k_C, N_C$ is guaranteed by the Chinese remainder theorem. Then we need only observe that $C = A \diamond B$ by definition.
	\end{proof}
	
	Now that weak cells have been defined, we define $\mathcal{S}_n$ analogously to $\mathcal{T}_n$ but with the supposition that the Presburger set that gives rise to each member of $\mathcal{S}_n$ be a weak cell:
	
	\begin{defn}
		For $A \subseteq \mathbb{Z}^{n+1}$, $A \in \mathcal{S}_{n+1}$ iff there exist $m \in \mathbb{N}$, a weak cell $C \subseteq \mathbb{Z}^{m+n+1}$, and an indexed family $(P_\alpha)_{\alpha \in I}$ of subsets of $E^m$ such that $A = \bigcup_{\alpha \in I} \bigcap_{u \in P_\alpha} C_u$. (We let $\mathcal{S}_0 = \mathcal{P}(\mathbb{Z}^0)$.)
	\end{defn}
	
	We will refer to the sets in $\mathcal{S}_{n+1}$ arising from each of the four different types of weak cell as being of types (i) through (iv) accordingly.
	
	In \cite{FM}, the authors make use of a very technical lemma in order to show that sets in $\mathcal{T}_n$ can be decomposed into sets in $\mathcal{S}_n$. We will need this same result here, but with $\mathbb{R}$ replaced by $\mathbb{Z}$:
	
	\begin{lem}
	\label{technical_union}
		Let $C_1, \dots, C_{k+1} \subseteq \mathbb{Z}^m \times \mathbb{Z}^n$, and let $(P_\alpha)_{\alpha \in I}$ be a family of subsets of $\mathbb{Z}^m$. Then $\bigcup_{\alpha \in I} \bigcap_{u \in P_\alpha} (C_1 \cup \dots \cup C_{k+1})_u$ is equal to the union:
		
		$$\bigcup_{\alpha \in I} \bigcap_{u \in P_\alpha} (C_1 \cup \dots \cup C_k)_u$$
		$$ \cup \bigcup_{\alpha \in I} \bigcap_{u \in P_\alpha} (C_{k+1})_u$$
		$$ \cup \bigcup_{\alpha \in I} \bigcup_{P_{\alpha, 1}, P_{\alpha, 2}} \bigcap_{(v, w) \in P_{\alpha, 1} \times P_{\alpha, 2}} ((C_1 \diamond C_{k+1}) \cup \dots \cup (C_k \diamond C_{k+1}))_{(v,w)}$$
		
		where, for $\alpha \in I$, $\{P_{\alpha, 1}, P_{\alpha, 2}\}$ are the partitions of $P_\alpha$ into two sets.
	\end{lem}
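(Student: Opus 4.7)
The plan is to prove the equality by showing inclusions in both directions. The key observation is that for a fixed $z$ in the LHS with witnessing index $\alpha$, the set $P_\alpha$ naturally splits according to whether $(u,z)$ lies in $C_{k+1}$, and the three clauses on the RHS correspond precisely to the cases in which one side of this split is empty, the other side is empty, or both are nonempty.

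For the forward inclusion, I would fix $z$ in the LHS and choose $\alpha \in I$ such that $z \in \bigcap_{u \in P_\alpha}(C_1 \cup \dots \cup C_{k+1})_u$. Define
$$P_{\alpha,2} := \{u \in P_\alpha : (u,z) \in C_{k+1}\}, \qquad P_{\alpha,1} := P_\alpha \setminus P_{\alpha,2}.$$
If $P_{\alpha,2} = \emptyset$, then every $u \in P_\alpha$ must already satisfy $(u,z) \in C_1 \cup \dots \cup C_k$, placing $z$ in the first term of the RHS. If $P_{\alpha,1} = \emptyset$, every $u \in P_\alpha$ satisfies $(u,z) \in C_{k+1}$, placing $z$ in the second term. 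Otherwise both pieces are nonempty, so $\{P_{\alpha,1}, P_{\alpha,2}\}$ is a genuine two-set partition of $P_\alpha$; for every $(v,w) \in P_{\alpha,1} \times P_{\alpha,2}$ one has $(v,z) \in C_i$ for some $i \leq k$ and $(w,z) \in C_{k+1}$, hence $z \in (C_i \diamond C_{k+1})_{(v,w)}$, witnessing membership in the third term.

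For the reverse inclusion, the first two clauses of the RHS embed trivially into the LHS via $(C_1 \cup \dots \cup C_k)_u \subseteq (C_1 \cup \dots \cup C_{k+1})_u$ and $(C_{k+1})_u \subseteq (C_1 \cup \dots \cup C_{k+1})_u$. For the third clause, suppose $z$ lies in the intersection corresponding to some partition $\{P_{\alpha,1}, P_{\alpha,2}\}$ of $P_\alpha$. Pick an arbitrary $u \in P_\alpha$. If $u \in P_{\alpha,1}$, select any $w \in P_{\alpha,2}$ (which is nonempty since we have a two-set partition) and use $z \in ((C_1 \diamond C_{k+1}) \cup \dots \cup (C_k \diamond C_{k+1}))_{(u,w)}$ to conclude $(u,z) \in C_i$ for some $i \leq k$. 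Symmetrically if $u \in P_{\alpha,2}$. Either way $(u,z) \in C_1 \cup \dots \cup C_{k+1}$, and since $u$ was arbitrary, $z$ lies in the LHS.

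The proof is essentially bookkeeping, and the only subtlety worth flagging is the treatment of the degenerate cases where one side of the partition is empty: the convention that a partition into two sets has both pieces nonempty is precisely what forces the first two clauses to appear on the RHS as separate terms rather than being absorbed into the third.
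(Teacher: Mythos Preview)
Your argument is correct and is precisely the elementary double-inclusion proof one would expect. The paper itself does not spell out a proof at all: it simply observes that the statement is Lemma~2 of Friedman--Miller with $\mathbb{R}$ replaced by $\mathbb{Z}$ and that nothing in that argument depends on the ambient set, so your write-up in fact supplies the details the paper omits.
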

	\begin{proof}
	    The proof is elementary. We note that this lemma is exactly Lemma 2 of \cite{FM}, with $\mathbb{R}$ replaced by $\mathbb{Z}$. But there is no reference to the algebraic or topological structure of either, and neither does the result depend on countability, so this substitution makes no difference to the proof.
	\end{proof}
	
	We can now establish:
	
	\begin{clm}
	\label{s_n_finite_union}
		Every element of $\mathcal{T}_n$ is a finite union of elements of $\mathcal{S}_n$.
	\end{clm}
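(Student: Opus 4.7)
The plan is to apply Presburger cell decomposition to the witnessing Presburger set and then induct on the number of resulting cells, using Lemma \ref{technical_union} at each step to shrink the cell count. The case $n = 0$ is immediate from the definition $\mathcal{S}_0 = \mathcal{P}(\mathbb{Z}^0)$, so I will assume $n \geq 1$.

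First, given $A \in \mathcal{T}_n$, I fix a representation $A = \bigcup_{\alpha \in I} \bigcap_{u \in P_\alpha} X_u$ with $X \subseteq \mathbb{Z}^{m+n}$ Presburger and each $P_\alpha \subseteq E^m$. By Presburger cell decomposition, distinguishing the final coordinate, I write $X$ as a finite union $C_1 \cup \cdots \cup C_N$ of Presburger cells; each such cell (including the graph case $t = f(x)$, which is type (iv) with $f = g$) fits one of the four templates in the definition of a weak cell, so $X$ is a union of $N$ weak cells in $\mathbb{Z}^{m+n}$. I then proceed by induction on $N$.

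The base case $N = 1$ is immediate: $X$ itself is a weak cell, so $A \in \mathcal{S}_n$. For the inductive step, I apply Lemma \ref{technical_union} to the splitting $X = (C_1 \cup \cdots \cup C_{N-1}) \cup C_N$, expressing $A$ as the union of three pieces: (a) $\bigcup_\alpha \bigcap_{u \in P_\alpha}(C_1 \cup \cdots \cup C_{N-1})_u$, (b) $\bigcup_\alpha \bigcap_{u \in P_\alpha}(C_N)_u$, and (c) the diamond term $\bigcup_{\alpha}\bigcup_{P_{\alpha,1},P_{\alpha,2}} \bigcap_{(v,w) \in P_{\alpha,1} \times P_{\alpha,2}} \bigl(\bigcup_{i<N}(C_i \diamond C_N)\bigr)_{(v,w)}$. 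Piece (a) is a $\mathcal{T}_n$-expression whose witness set is a union of $N-1$ weak cells, so the inductive hypothesis applies; piece (b) is directly an $\mathcal{S}_n$-set. For piece (c), I reindex the outer double union as a single union over $I' = \{(\alpha, P_{\alpha,1}, P_{\alpha,2}) : \alpha \in I,\ P_{\alpha,1} \sqcup P_{\alpha,2} = P_\alpha\}$, with new index families $Q_{(\alpha, P_{\alpha,1}, P_{\alpha,2})} = P_{\alpha,1} \times P_{\alpha,2} \subseteq E^{2m}$. Lemma \ref{weak_cell_product} guarantees that each $C_i \diamond C_N$ is a weak cell, so the underlying Presburger set is again a union of only $N-1$ weak cells, and the inductive hypothesis applies (now with parameter $2m$ rather than $m$).

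The main obstacle is conceptual rather than technical: although Lemma \ref{technical_union} inflates the index arity from $E^m$ to $E^{2m}$ in the diamond term, it strictly decreases the number of weak cells in the underlying Presburger set. Formulating the inductive hypothesis as a statement uniform in $m$ (with induction strictly on the cell count) is what makes the recursion terminate. Once the bookkeeping of this reindexing is in place, the remaining manipulations are routine.
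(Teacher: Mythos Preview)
Your proposal is correct and follows essentially the same approach as the paper: decompose the witnessing Presburger set $X$ into finitely many weak cells (the paper cites Cluckers for this), then induct on the number of cells using Lemma~\ref{technical_union} to reduce the count and Lemma~\ref{weak_cell_product} to keep the diamond pieces in the class of weak cells. Your remark that the induction must be formulated uniformly in $m$ is exactly the point implicit in the paper's phrasing ``for all $k \geq 1$, if $m \in \mathbb{N}$, $C_1, \dots, C_k$ are weak cells\ldots''.
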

	\begin{proof}
		We claim first that for all $k \geq 1$, if $m \in \mathbb{N}$, $C_1, \dots, C_k$ are weak cells in $\mathbb{Z}^{m+n+1}$, and $(P_\alpha)_{\alpha \in I}$ is a family of subsets of $E^m$, then $\bigcup_{\alpha \in I} \bigcap_{u \in P_\alpha} (C_1 \cup \dots \cup C_k)_u$ is a finite union of elements of $\mathcal{S}_{n+1}$. When $k = 1$, this is the definition of $\mathcal{S}_{n+1}$. When $k > 1$, we use Lemma \ref{technical_union} to transform the expression into a union of corresponding expressions with a smaller value of $k$, and Lemma \ref{weak_cell_product} gives that the sets $(C_i \diamond C_{k})$ required in the process are still weak cells. So by induction, the above claim holds for all $k \geq 1$.
		
		We must now show that a set in $\mathcal{T}_n$ may always be written in this form. Recall that by definition,  $A \in \mathcal{T}_n$ iff there exist $m \in \mathbb{N}$, $X \subseteq \mathbb{Z}^{m + n}$ Presburger, and an indexed family $(P_\alpha)_{\alpha \in I}$ of subsets of $E^m$ such that $A = \bigcup_{\alpha \in I} \bigcap_{u \in P_\alpha} X_u$. So it suffices to show that any Presburger set $X \subseteq \mathbb{Z}^{m + n}$ is a union of weak cells $C_1 \cup \dots \cup C_k$. For this, we utilize a theorem of Cluckers \cite{Cluckers}. There, the author defines the notion of a \textit{Presburger cell} and proves that the domain of any Presburger function can be partitioned into finitely many Presburger cells such that the function is linear on each cell. Presburger cells in Cluckers's paper are necessarily weak cells by our definition, so this implies that $X$ can be partitioned into weak cells, concluding the proof.
	\end{proof}
	
	Next, we aim to show the following. Combined with the previous claim, this will show that the collection of finite unions of sets in $\mathcal{S}_n$ is closed under projection:
	
	\begin{clm}
	\label{s_n_projection}
		The projection of a set in $\mathcal{S}_{n+1}$ onto the first $n$ coordinates is a set in $\mathcal{T}_n$.
	\end{clm}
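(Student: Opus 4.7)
The plan is to take an arbitrary $A = \bigcup_{\alpha \in I} \bigcap_{u \in P_\alpha} C_u \in \mathcal{S}_{n+1}$, where $C \subseteq \mathbb{Z}^{m+n+1}$ is a weak cell, and show its projection $\pi(A)$ onto the first $n$ coordinates lies in $\mathcal{T}_n$. Since projection commutes with the outer union over $\alpha$, I will rewrite $\pi(\bigcap_{u \in P_\alpha} C_u)$ as a $\mathcal{T}_n$-style expression whose extra parameters live in $E^{m'}$ for some $m'$, then reassemble everything into a single $\mathcal{T}_n$ expression by folding those parameters into the index. I case split on the four types of weak cell. Type (i) is essentially free: the defining condition on $t$ is a residue class independent of $u$, so $\pi(\bigcap_u C_u) = \bigcap_u S_u$, where $S \subseteq \mathbb{Z}^{m+n}$ is the Presburger set defining the weak cell. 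Types (ii) and (iii) are symmetric to each other and strictly easier than type (iv), so I focus on the hardest case.

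In type (iv), $\pi(\bigcap_{u \in P_\alpha} C_u)$ consists of those $x$ for which $(u, x) \in S$ for every $u \in P_\alpha$ and for which there exists $t \equiv k \pmod{N}$ satisfying $\sup_{u \in P_\alpha} f(u, x) \leq t \leq \inf_{u \in P_\alpha} g(u, x)$. The key idea I will exploit is that every nonempty bounded subset of $\mathbb{Z}$ attains its supremum and infimum. Thus the inner existential becomes: there exist $u^*, u^{**} \in P_\alpha$ with $f(u, x) \leq f(u^*, x)$ and $g(u, x) \geq g(u^{**}, x)$ for all $u \in P_\alpha$, together with the Presburger condition on $(u^*, u^{**}, x)$ that $[f(u^*, x), g(u^{**}, x)]$ meets the residue class $k$ modulo $N$. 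Packaging this, I obtain
\[
\pi\!\left(\bigcap_{u \in P_\alpha} C_u\right) = \bigcup_{(u^*, u^{**}) \in P_\alpha \times P_\alpha} \bigcap_{u \in P_\alpha} Y_{(u, u^*, u^{**})},
\]
where $Y \subseteq \mathbb{Z}^{3m + n}$ is a single Presburger set whose $(u, u^*, u^{**})$-fiber absorbs $S_u$, the two inequalities on $f$ and $g$, and the residue condition (pulled back so as not to depend on $u$). Re-indexing the double union over $\alpha$ and $(u^*, u^{**})$ by $\beta = (\alpha, u^*, u^{**})$ with $\tilde P_\beta = \{(u, u^*, u^{**}) : u \in P_\alpha\} \subseteq E^{3m}$ then displays $\pi(A)$ directly in $\mathcal{T}_n$-form. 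The degenerate case $P_\alpha = \emptyset$ contributes $\mathbb{Z}^n$ to the outer union, which is trivially Presburger and thus in $\mathcal{T}_n$.

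The main obstacle is the quantifier swap in the type (iv) case: pushing $\exists t$ past the conjunction over $u \in P_\alpha$ genuinely uses the discreteness of $\mathbb{Z}$, and the resulting witnesses $u^*, u^{**}$ must be taken from $P_\alpha$ (hence from $E^m$) rather than being free parameters in $\mathbb{Z}^m$---this is what permits the re-indexing step into $\mathcal{T}_n$-form. Verifying along the way that the condition "the interval $[a, b]$ contains an integer congruent to $k$ modulo $N$" is itself Presburger-definable in $(a, b)$ is a routine auxiliary check.
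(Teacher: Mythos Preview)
Your proposal is correct and follows essentially the same route as the paper's proof: case-split on the type of weak cell, and in type (iv) exploit discreteness of $\mathbb{Z}$ to replace $\sup_{u\in P_\alpha} f(u,x)$ and $\inf_{u\in P_\alpha} g(u,x)$ by witnesses $u^*,u^{**}\in P_\alpha$, then fold these witnesses into a new index set (the paper writes $I'=\{(\alpha,v,w):\alpha\in I,\ v,w\in P_\alpha\}$ and $P'_{(\alpha,v,w)}=P_\alpha\times\{v\}\times\{w\}$, exactly your $\beta$ and $\tilde P_\beta$), with the Presburger predicate ``$[a,b]$ meets the residue class $k\bmod N$'' absorbed into the defining set. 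The only cosmetic differences are that the paper spells out types (ii) and (iii) in full rather than subsuming them under (iv), and that you explicitly flag the $P_\alpha=\varnothing$ edge case, which the paper leaves implicit.
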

	\begin{proof}
		Let $A \in \mathcal{S}_{n+1}$. Then there exist $m \in \mathbb{N}$, a weak cell $C \subseteq \mathbb{Z}^{m+n+1}$, and an indexed family $(P_\alpha)_{\alpha \in I}$ of subsets of $E^m$ such that $A = \bigcup_{\alpha \in I} \bigcap_{u \in P_\alpha} C_u$. In other words, $x' \in \mathbb{Z}^{n+1}$ is in $A$ if and only if $\exists \alpha \in I : \forall u \in P_\alpha : (u, x') \in C$. So $x \in \mathbb{Z}^n$ is in $\pi A$, the projection of $A$ onto its first $n$ coordinates, when $\exists y \in \mathbb{Z} : \exists \alpha \in I : \forall u \in P_\alpha : (u, x, y) \in C$.
		
		Our goal is to show that $\pi A$ is in $\mathcal{T}_n$. To do this, we need to show that $x \in \pi A$ if and only if $\exists \alpha \in I' : \forall u \in P'_\alpha : \phi(u, x)$ for some natural number $m'$, index set $I'$, indexed collection $(P'_\alpha)_{\alpha \in I'}$ of subsets of $E^{m'}$, and Presburger formula $\phi$ with arity $m' + n$.
		
		We split into cases based on the type of $C$ (hence of $A$).
		
		\begin{enumerate}[(i)]
			\item In this case, $C = B \times \{t \in \mathbb{Z} : t \equiv k \pmod{N}\}$ where $B \subseteq \mathbb{Z}^{m+n}$ is Presburger and $k, N$ are constant integers. So:
			
			\begin{align*}
			x \in \pi A \iff& \exists y \in \mathbb{Z} : \exists \alpha \in I : \forall u \in P_\alpha : (u, x, y) \in C \wedge y \equiv k \pmod{N}\\
			\iff& \exists y \in \mathbb{Z} :  y \equiv k \pmod{N} \wedge \exists \alpha \in I : \forall u \in P_\alpha : (u, x) \in B\\
			\intertext{We no longer need to quantify over $y$, because this modular congruence can always be satisfied:}\
			\iff& \exists \alpha \in I : \forall u \in P_\alpha : (u, x) \in B
			\end{align*}
			
			\noindent This condition is now in the required form.
			
			\item In this case, $C = \{(x, t) \in \mathbb{Z}^{n+1} : x \in B, f(x) \leq t, t \equiv k \pmod{N}\}$ where $B \subseteq \mathbb{Z}^{m+n}$ and $f : \mathbb{Z}^{m+n} \to \mathbb{Z}$ are Presburger and $k, N \in \mathbb{Z}$. So:
			
			\begin{align*}
			x \in \pi A \iff& \exists y \in \mathbb{Z} : \exists \alpha \in I : \forall u \in P_\alpha : (u, x, y) \in C \\
			\iff& \exists y \in \mathbb{Z} : \exists \alpha \in I : \forall u \in P_\alpha : (u, x) \in B \wedge f(u, x) \leq y \wedge y \equiv k \pmod{N} \\
			\iff& \exists \alpha \in I : \exists y \in \mathbb{Z} : \forall u \in P_\alpha : (u, x) \in B \wedge f(u, x) \leq y \wedge y \equiv k \pmod{N} \\
			\intertext{We claim that the condition $y \equiv k \pmod{N}$ is unnecessary. Note that if there is some $y$ with $\forall u \in P_\alpha : f(u, x) \leq y$, we can merely add $1$ to $y$ until it satisfies the modular equivalence condition, without affecting the order condition. Therefore, the above is equivalent to:}
			\iff& \exists \alpha \in I : \exists y \in \mathbb{Z} : \forall u \in P_\alpha : (u, x) \in B \wedge f(u, x) \leq y\\
			\iff& \exists \alpha \in I : [\forall u \in P_\alpha : (u, x) \in B] \wedge [f(u, x) \text{ is bounded above for } u \in P_\alpha]\\
			\intertext{Because the values of $f$ are integers, $f$ is bounded above iff it has a maximum value on the given domain:}
			\iff& \exists \alpha \in I : [\forall u \in P_\alpha : (u, x) \in B] \wedge [\exists v \in P_\alpha : \forall u \in P_\alpha : f(u, x) \leq f(v, x)]\\
			\iff& \exists \alpha \in I : \exists v \in P_\alpha : \forall u \in P_\alpha : (u, x) \in B \wedge f(u, x) \leq f(v, x)\\
			\intertext{Now, note that we may let $I' = \{(\alpha, v) : \alpha \in I \wedge v \in P_\alpha\}$ and let $P'_{(\alpha, v)} = P_\alpha \times \{v\}$:}
			\iff& \exists (\alpha, v) \in I' : \forall (u, v) \in P'_{(\alpha, v)} : (u, x) \in B \wedge f(u, x) \leq f(v, x)
			\end{align*}
			
			\noindent This condition is now in the required form.
			
			\item In this case, $C = \{(x, t) \in \mathbb{Z}^{n+1} : x \in B, t \leq g(x), t \equiv k \pmod{N}\}$ where $B \subseteq \mathbb{Z}^{m+n}$ and $g : \mathbb{Z}^{m+n} \to \mathbb{Z}$ are Presburger and $k, N \in \mathbb{Z}$. So:
			
			\begin{align*}
			x \in \pi A \iff& \exists y \in \mathbb{Z} : \exists \alpha \in I : \forall u \in P_\alpha : (u, x, y) \in C\\
			\iff& \exists y \in \mathbb{Z} : \exists \alpha \in I : \forall u \in P_\alpha : (u, x) \in B \wedge y \leq g(u, x) \wedge y \equiv k \pmod{N}\\
			\iff& \exists \alpha \in I : \exists y \in \mathbb{Z} : \forall u \in P_\alpha : (u, x) \in B \wedge y \leq g(u, x) \wedge y \equiv k \pmod{N}\\
			\intertext{As in (ii), the condition $y \equiv k \pmod{N}$ is unnecessary, this time because we can always \textit{decrease} $y$ until it is satisfied:}
			\iff& \exists \alpha \in I : \exists y \in \mathbb{Z} : \forall u \in P_\alpha : (u, x) \in B \wedge y \leq g(u, x)\\
			\iff& \exists \alpha \in I : [\forall u \in P_\alpha : (u, x) \in B] \wedge [g(u, x) \text{ is bounded below for } u \in P_\alpha]\\
			\intertext{Because the values of $f$ are integers, $f$ is bounded below iff it has a minimum value on the given domain:}
		    \iff& \exists \alpha \in I : [\forall u \in P_\alpha : (u, x) \in B] \wedge [\exists v \in P_\alpha : \forall u \in P_\alpha : g(u, x) \geq g(v, x)]\\
			\iff& \exists \alpha \in I : \exists v \in P_\alpha : \forall u \in P_\alpha : (u, x) \in B \wedge g(u, x) \geq g(v, x)
			\end{align*}
			
			\noindent We can now apply the same technique as in (ii) to put this condition in the required form.
			
			\item In this case, $C = \{(x, t) \in \mathbb{Z}^{n+1} : x \in B, f(x) \leq t \leq g(x), t \equiv k \pmod{N}\}$ where $B \subseteq \mathbb{Z}^{m+n}$ and $f, g : \mathbb{Z}^{m+n} \to \mathbb{Z}$ are Presburger and $k, N \in \mathbb{Z}$. So:
			
			\begin{align*}
			x \in \pi A \iff& \exists y \in \mathbb{Z} : \exists \alpha \in I : \forall u \in P_\alpha : (u, x, y) \in C\\
			\iff& \exists y \in \mathbb{Z} : \exists \alpha \in I : \forall u \in P_\alpha : (u, x) \in B\\
			&\wedge f(u, x) \leq y \wedge y \leq g(u, x) \wedge y \equiv k \pmod{N}\\
			\iff& \exists \alpha \in I : \exists y \in \mathbb{Z} : \forall u \in P_\alpha : (u, x) \in B\\
			&\wedge f(u, x) \leq y \wedge y \leq g(u, x) \wedge y \equiv k \pmod{N}.
			\end{align*}
			
			This is the most complex case. To fully analyze it, we introduce a predicate $\psi$:
			
			$$\psi_{k, N}(a, b) \iff \exists y \in \mathbb{Z} : a \leq y \wedge y \leq b \wedge y \equiv k \pmod{N}$$
			
			The condition $\psi$ is Presburger (given constant $k$ and $N$). Now, again, the above condition implies that $f$ is bounded above and that $g$ is bounded below on the given domain. Therefore, $f$ has a maximum and $g$ a minimum on the given domain. So the above condition is equivalent to:
			
			\begin{align*}
			x \in \pi A \iff& \exists \alpha \in I : \psi_{k, N}(\max_{u \in P_\alpha} f(u, x), \min_{u \in P_\alpha} g(u, x)) \\
			\iff& \exists \alpha \in I : \exists v, w \in P_\alpha : [\forall u \in P_\alpha : f(u, x) \leq f(v, x)] \\
			&\wedge [\forall u \in P_\alpha : g(u, x) \geq g(w, x)] \wedge [\psi_{k, N}(f(v, x), g(w, x))] \\
			\iff& \exists \alpha \in I : \exists v, w \in P_\alpha : \forall u \in P_\alpha : f(u, x) \leq f(v, x) \\
			&\wedge g(u, x) \geq g(w, x) \wedge \psi_{k, N}(f(v, x), g(w, x))\\
			\intertext{Now let $I' = \{(\alpha, v, w) : \alpha \in I \wedge v, w \in P_\alpha\}$ and let $P'_{\alpha, v, w} = P_\alpha \times \{v\} \times \{w\}$:}
			\iff& \exists (\alpha, v, w) \in I' : \forall (u, v, w) \in P'_{\alpha, v, w} : f(u, x) \leq f(v, x)\\
			&\wedge g(u, x) \geq g(w, x) \wedge \psi_{k, N}(f(v, x), g(w, x))
			\end{align*}
			
			\noindent This condition is now in the required form.
		\end{enumerate}
	\end{proof}
	
	The next claim is, in essence, the heart of the result, which establishes a dichotomy for $\mathcal{S}_1$, under which any sufficiently sparse set is contained in a Presburger image of our original expansion set $E$:

	\begin{clm}
	\label{s_1_dichotomy}
		Let $A \in \mathcal{S}_{1}$. Suppose that $A$ does not contain arbitrarily long pieces of the same arithmetic progression; in other words, suppose that given $N', k'$ there exists $\ell$ such that no $\ell$ consecutive terms of the sequence $(N'i+k')_i$ lie in $A$. Then there exist $M$ and $h : \mathbb{Z}^{M} \to \mathbb{Z}$ Presburger such that $A \subseteq h(E^M)$.
	\end{clm}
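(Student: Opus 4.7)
The natural approach is to unpack the representation of $A \in \mathcal{S}_1$ and proceed by case analysis on the type of the underlying weak cell. Write $A = \bigcup_{\alpha \in I} \bigcap_{u \in P_\alpha} C_u$ for some $m \in \mathbb{N}$, weak cell $C \subseteq \mathbb{Z}^{m+1}$, and family $(P_\alpha)_{\alpha \in I}$ of subsets of $E^m$. I would dispatch types (i), (ii), (iii) by showing $A = \varnothing$, and reserve the real work for type (iv).

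For $C$ of type (i), (ii), or (iii), any non-empty slice intersection $\bigcap_{u \in P_\alpha} C_u$ is an infinite arithmetic progression: all of $\mathbb{Z}$ when $P_\alpha = \varnothing$, the full residue class $\{t \equiv k \pmod{N}\}$ in type (i), or a half-infinite sub-progression $\{t \geq \max_u f(u),\ t \equiv k \pmod{N}\}$ or its lower-bound analogue in types (ii) and (iii). Each of these contains arbitrarily long runs of the AP $(Ni + k)_i$, contradicting the hypothesis on $A$. Hence every intersection is empty and $A = \varnothing$, making the conclusion vacuous.

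The substantive case is (iv), where $C = \{(x, t) : x \in S,\ f(x) \leq t \leq g(x),\ t \equiv k \pmod{N}\}$. Here each non-empty intersection is the finite progression $\{t : \max_{u \in P_\alpha} f(u) \leq t \leq \min_{u \in P_\alpha} g(u),\ t \equiv k \pmod{N}\}$, with both extrema attained since $f, g$ are integer-valued and bounded on $P_\alpha$ whenever the intersection is non-empty. The hypothesis on $A$, applied with $N' = N$ and $k' = k$, supplies a uniform bound $\ell$ on the length of any such progression. Therefore every $t \in A$ has the form $t = f(v) + jN$ for some $v \in P_\alpha \subseteq E^m$ maximizing $f$ on $P_\alpha$ and some $j \in \{0, 1, \dots, \ell - 1\}$, yielding $A \subseteq \{f(v) + jN : v \in E^m,\ 0 \leq j < \ell\}$.

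To finish, I would exhibit a single Presburger $h$ whose image on a power of $E$ covers this set. Fixing some $c \in E$ and taking $M = m + \ell$, I would define $h(v, e_1, \dots, e_\ell) = f(v) + J(e_1, \dots, e_\ell) \cdot N$, where $J$ returns $i - 1$ for the least $i$ with $e_i = c$ and $0$ when no such $i$ exists; this $J$ is Presburger by a finite piecewise case distinction. Any desired offset $j^* \in \{0, \dots, \ell - 1\}$ is then realized by setting $e_{j^* + 1} = c$ with the earlier $e_i$ equal to any element of $E$ different from $c$, which requires $|E| \geq 2$; the degenerate cases $|E| \leq 1$ collapse $(\mathbb{Z}, <, +, E)^\#$ to $\mathcal{Z}$ itself, making $A$ a finite Presburger set that one handles directly. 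I expect this final encoding step to be the main obstacle: assembling all $\ell$ bounded offsets into the range of a single Presburger function with inputs drawn from $E$ forces the use of several auxiliary coordinates and a careful piecewise definition, and it is the only place where more than the bare hypothesis on $A$ is used.
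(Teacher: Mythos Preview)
Your approach mirrors the paper's almost exactly: case-split on the weak-cell type, dispatch (i)--(iii) by exhibiting an infinite arithmetic progression inside any nonempty $\bigcap_u C_u$, and in case (iv) apply the hypothesis at $(N',k')=(N,k)$ to bound each $A_\alpha$ uniformly, then encode the bounded offset with extra $E$-coordinates.

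There is, however, one concrete slip in case (iv). You assert that every $t\in A_\alpha$ has the form $f(v)+jN$ with $0\le j<\ell$, where $v$ realises $\max_{u\in P_\alpha}f(u)$. But $f(v)$ need not be congruent to $k$ modulo $N$: the least element of $A_\alpha$ is the smallest $t\ge f(v)$ with $t\equiv k\pmod N$, so $t-f(v)$ can have any residue in $\{0,\dots,N-1\}$. Consequently your image $\{f(v)+jN:v\in E^m,\ 0\le j<\ell\}$ may miss $A$ altogether. The paper's fix is simply to note $t-f(v)\in\{0,1,\dots,\ell N-1\}$ and encode all $\ell N$ integer offsets, using one extra coordinate that takes $\ell N$ distinguished values from $E$ (so $M=m+1$). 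Your position-of-first-$c$ device works equally well once you enlarge the offset range to $\{0,\dots,\ell N-1\}$ and take $M=m+\ell N$; alternatively, replace $f(v)$ by $f(v)+r$ where $r\in\{0,\dots,N-1\}$ is the Presburger-definable adjustment making $f(v)+r\equiv k\pmod N$, and your $\ell$ offsets of step $N$ then suffice.

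A smaller remark: your treatment of $|E|\le1$ does not quite close. Even when $A$ is finite, $h(E^M)$ is a singleton for every $M$ once $|E|\le1$, so $A\subseteq h(E^M)$ forces $|A|\le1$. The paper's proof likewise tacitly assumes $E$ has at least $\ell N$ elements; since $E$ is infinite in the intended application, neither argument needs to say more.
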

    \begin{proof}
        Let $A$ be as given. By definition there exist $m \in \mathbb{N}$, a weak cell $C \subseteq \mathbb{Z}^{m+1}$, and an indexed family $(P_\alpha)_{\alpha \in I}$ of subsets of $E^m$ such that $A = \bigcup_{\alpha \in I} \bigcap_{u \in P_\alpha} C_u$. Assume that for each $\alpha$ the set $\bigcap_{u \in P_\alpha} C_u$ is nonempty (otherwise we may remove it from $I$ without affecting $A$). Moreover assume $I$ is nonempty, as otherwise $A$ is empty and the claim trivially follows.
        
        If $A$ is of type (i), then $A$ can be written as $\{t \in \mathbb{Z} : t \equiv k \pmod{N}\}$. But then $A$ fails our additional assumption for $(N', k') = (N, k)$. So this is impossible. If $A$ is of type (ii), $A$ is a union of intersections of sets of the form $\{t \in \mathbb{Z} : x \leq t, t \equiv k \pmod{N}\}$ for the same $k, N$ and varying numbers $x$; again, this is impossible because $A$ will contain all sufficiently high elements of the arithmetic progression for $N' = N, k' = k$. The case where $A$ is of type (iii) is analogous.
        
        So $A$ must be of type (iv); hence:
        
        \begin{align*}
        A =& \bigcup_{\alpha \in I} \bigcap_{u \in P_\alpha} \{(x, t) \in \mathbb{Z}^{m+1} : x \in B, f(x) \leq t \leq g(x), t \equiv k \pmod{N}\}_u\\
        =& \bigcup_{\alpha \in I} \bigcap_{u \in P_\alpha} \{t \in \mathbb{Z} : u \in B, f(u) \leq t \leq g(u), t \equiv k \pmod{N}\}
        \end{align*}
        
        for some $B, f, g$ Presburger and $k, N \in \mathbb{Z}$. We may remove the $u \in B$ condition by assuming each $P_\alpha$ is a subset of $B$. Then this is equivalent to:
        
        $$A = \bigcup_{\alpha \in I} \{t \in \mathbb{Z} : \max_{u \in P_\alpha} f(u) \leq t \leq \min_{u \in P_\alpha} g(u), t \equiv k \pmod{N}\}.$$
        
        We denote the set $\{t \in \mathbb{Z} : \max_{u \in P_\alpha} f(u) \leq t \leq \min_{u \in P_\alpha} g(u), t \equiv k \pmod{N}\}$ in this cover by $A_\alpha$.
        
        Now note that there exists $\ell$ such that no $\ell$ consecutive terms of $(Ni+k)_i$ may lie in $A$, hence in any $A_\alpha$. Therefore $\min_{u \in P_\alpha} g(u) < \max_{u \in P_\alpha} f(u) + \ell N$. By the well-ordering principle, $\max_{u \in P_\alpha} f(u)$ is well-defined as a maximum i.e. is in the image $f(E^m)$. Therefore every element of $A$ is equal to an element of $f(E^m)$ plus a nonnegative integer less than $\ell N$.
        
        Let $M = m+1$, and let $e_0, \dots, e_{\ell N - 1}$ be $\ell N$ distinct elements of $E$. Define $h : \mathbb{Z}^M \to \mathbb{Z}$ as:
        
        $$h(x_1, \dots, x_{m+1}) = \left\{ \begin{array}{cc}
		    f(x_1, \dots, x_m) + 0 & \text{if } x_{m+1} = e_0 \\
		    f(x_1, \dots, x_m) + 1 & \text{if } x_{m+1} = e_1 \\
		    \vdots & \vdots \\
		    f(x_1, \dots, x_m) + \ell N - 1 & \text{if } x_{m+1} = e_{\ell N - 1} \\
		    0 & \text{otherwise.}
		    \end{array} \right.$$
		    
		(Note that $h$ is Presburger, because it is a composition of $f$ with piecewise linear functions.) Then $h$ has the required property; i.e. as proven above, every element of $A$ is in $h(E^M)$.
    \end{proof}
    
    Using this dichotomy, we are now at last able to prove Theorem C.
    
    \begin{thmC}
        Let $\mathcal{I}$ be any set-theoretic ideal on $\mathbb{Z}$. Let $E \subseteq \mathbb{Z}$ be such that, for every $M \in \mathbb{N}$ and $h : \mathbb{Z}^M \to \mathbb{Z}$ Presburger-definable, the image $h(E^M)$ is in $\mathcal{I}$. Then every subset of $\mathbb{Z}$ definable in $(\mathbb{Z}, <, +, E)^\#$ either contains arbitrarily long pieces of some arithmetic progression or lies in $\mathcal{I}$.
    \end{thmC}
    \begin{proof}
        Consider $(\mathbb{Z}, (Y))$ where $Y$ ranges over all elements of all $\mathcal{S}_k$ for $k \in \mathbb{N}$. We demonstrated earlier that preimages of subsets of $E^k$ under Presburger functions are in $\mathcal{T}_n$, so in particular every such subset $S$ is also in some $\mathcal{T}_n$ and thus by Claim \ref{s_n_finite_union} is a finite union of sets in $\mathcal{S}_n$. We also demonstrated earlier that Presburger sets are also in $\mathcal{T}_n$. Therefore, every set definable in $(\mathbb{Z}, <, +, E)^\#$ is also definable in $(\mathbb{Z}, (Y))$. By combining Claims \ref{t_n_boolean_algebra}, \ref{s_n_finite_union}, and \ref{s_n_projection}, we see that in fact any subset of $\mathbb{Z}^n$ definable in $(\mathbb{Z}, (Y))$ is a finite union of sets in $\mathcal{S}_n$. We deduce that every subset $S$ of $\mathbb{Z}$ definable in $(\mathbb{Z}, <, +, E)^\#$ is a finite union of elements of $\mathcal{S}_1$. Call these elements $S_1$ through $S_r$.
        
        Assume that such an $S$ does not contain arbitrarily long pieces of the same arithmetic progression. Then neither do $S_1$ through $S_r$; by Claim \ref{s_1_dichotomy}, each $S_i$ is a subset of the image of $E^M$ for some $M$ under some Presburger function $h$. Said images, by assumption, lie in $\mathcal{I}$; hence, so do any subsets thereof, and any finite unions of subsets thereof. So $S \in \mathcal{I}$.
    \end{proof}
    
\bibliographystyle{amsplain}
\bibliography{biblio}
\end{document}